\def\etal.{et\penalty50\ al.}
\begin{document}

\title{Optimal Power Consumption for Demand Response of Thermostatically Controlled Loads}

\author[1]{Abhishek Halder*}

\author[2]{Xinbo Geng}

\author[3]{Fernando A.C.C. Fontes}

\author[2]{P.R. Kumar}

\author[2]{Le Xie}

\authormark{HALDER \textsc{et al}}

\address[1]{\orgdiv{Department of Applied Mathematics and Statistics}, \orgname{University of California, Santa Cruz}, \orgaddress{\state{CA}, \country{USA}}}

\address[2]{\orgdiv{Department of Electrical and Computer Engineering}, \orgname{Texas A\&M University}, \orgaddress{\state{TX}, \country{USA}}}

\address[3]{\orgdiv{ISR-Porto and Faculdade de Engenharia}, \orgname{Universidade do Porto}, \orgaddress{\state{Porto}, \country{Portugal}}}

\corres{*Abhishek Halder, Department of Applied Mathematics and Statistics, University of California, Santa Cruz, CA 95064. \email{ahalder@ucsc.edu}}


\abstract[Summary]{We consider the problem of determining
the optimal aggregate power consumption of a population of thermostatically controlled loads such as air conditioners. This is motivated by the need to synthesize the demand response for a load serving entity (LSE) catering a population of such customers. We show how the LSE can opportunistically design the aggregate reference consumption to minimize its energy procurement cost, given day-ahead price, load and ambient temperature forecasts, while respecting each individual load's comfort range constraints. The resulting synthesis problem is intractable when posed as a direct optimization problem after Euler discretization of the dynamics, since it results in a mixed integer linear programming problem with number of variables typically of the order of millions. In contrast, in this paper we show that the problem is amenable to continuous-time optimal control techniques. Numerical simulations elucidate how the LSE can use the optimal aggregate power consumption trajectory thus computed, for the purpose of demand response.}

\keywords{thermostatically controlled loads, Pontryagin's maximum principle, day-ahead price, demand response}


\maketitle


\section{Introduction}\label{sec1}

Motivated by the goal of sustainable electricity generation, renewables
such as solar and wind are of increasing interest as electric energy resources. Concomitantly, the inherent time variability of such renewable generation is shifting modern power system operation from the traditional ``supply follows demand'' paradigm to the one where ``demand adapts to supply''. This new operational paradigm, called ``demand response'' \cite{DOE2006Report,callaway2011achieving}, can leverage demand side flexibility to offset variability in generation. Of particular interest in this context are thermostatically controlled loads (TCLs) such as air conditioners.
In this paper we examine how an ``aggregator'', also known as a ``load serving entity'' (LSE), can employ
a population of its
customers' TCLs to shape the aggregate power consumption, while adhering to each load's comfort constraints.
We consider an LSE buying energy from the day-ahead market for a population of TCLs.
We address the question of designing the optimal aggregate power consumption trajectory for this population, given a forecast of day-ahead price trajectory.

\subsection*{Related Work}
Modeling the dynamics of a population of TCLs has been investigated in several papers \cite{ChongDebs1979, MalhameChong1985, Callaway2009Energy, BashashFathy2011ACC, KunduSinitsynBackhausHiskens2011PSCC, MathieuKochCallaway2013TPS, Zhang2013TPS, TotuWisniewski2014}, with the aim of deriving control-oriented models that can accurately predict the aggregate power trajectory.
Once such a model is obtained, the predominant focus in these and other papers \cite{GhaffariMouraKrstic2014DSMC,Grammatico2015ECC,Meyn2015TAC} is to design a model-based setpoint controller to enable the TCL population track a given reference aggregate power trajectory in real-time, thereby compensating for the possible mismatch between the real-time and forecasted ambient temperatures. 
In contrast to these papers, where the availability of a reference power trajectory is assumed for real-time control design, we focus on the case where the LSE determines this reference to minimize its energy procurement cost while guaranteeing that the reference trajectory can indeed be tracked by the aggregate dynamics without violating individual comfort range constraints.

In Paccagnan \etal. \cite{Paccagnan2015Range}, the range of feasible reference power trajectories was studied. 
Reulens \etal. \cite{Ruelens2014PSCC} adopted a model-free approach to schedule a cluster of electric water heaters using batch reinforcement learning. Also relevant to our work is the paper by Mathieu \etal. \cite{mathieu2015arbitraging} where minimizing TCL energy consumption cost subject to end users' comfort zones was considered (see equation (5) in Section III of that reference). The perspective and results of our paper differ significantly from the aforesaid formulation in that we allow a target total energy budget constraint for the LSE, which in turn prohibits transcribing the overall (discrete version of the) optimization problem into a set of decoupled mixed integer linear programs (MILPs), as was the case in Mathieu \etal.\cite{mathieu2015arbitraging} In fact, it is precisely the dynamic coupling that makes the non-convex optimal control problem difficult to solve by a direct ``discretize-then-optimize'' approach, as we explain further in Section 3.3.

\subsection*{Contributions of This Paper}
For an LSE managing a finite population of TCLs, we formulate and solve the optimal aggregate power consumption design as a finite horizon deterministic optimal control problem. The contribution of the present paper beyond our previous work \cite{AbhishekSGC2015, AbhishekTPS2017} is that herein, we analytically solve the continuous-time optimal control problem (Section IV), thereby revealing qualitative insights on how the LSE can use the knowledge of day-ahead price forecast, load forecast, and ambient temperature forecast, for the purpose of energy procurement at least cost. Furthermore, when there is additional constraint on minimum thermostatic switching period, we provide an algorithm (Section V and VI) to recover the optimal binary controls from the corresponding convexified optimal control solutions.

In the presence of state inequality constraints arising from comfort range contracts between the LSE and individual TCLs, the optimal controls are shown to depend on both the shape of the day-ahead price trajectory, and on the minimum switching period (also known as ``lockout constraint'' \cite{Zhang2013TPS}) at the upper and lower comfort boundaries allowable by the thermostats. Specifically, the application of Pontryagin's maximum principle (PMP) reveals that the optimal policy is a function of certain ``threshold price'' to be computed from the day-ahead price forecast. The resulting optimal indoor temperature trajectories are described in terms of the so-called ``two-sided Skorokhod maps'' \cite{Kruk2007,Kruk2008} parameterized by individual TCL's upper and lower comfort boundaries.

This paper is organized as follows. In Section II, we describe the mathematical models. In Section III, we formulate the design of power consumption as  an optimal control problem. Sections IV and V present the solution of the power consumption design problem. In Section VI, numerical results based on the day-head price forecast data from Electric Reliability Council of Texas (ERCOT) and the ambient temperature forecast data from a weather station in Houston, Texas are reported, to illustrate how the LSE can use the optimal power consumption trajectory computed via the proposed framework, for the purpose of demand response. Section VII concludes the paper.

\subsection*{Notation}
We use the symbols $\mathds{1}_{X}$ and $|X|$ to respectively denote the indicator function, and the Lebesgue measure of set $X$. The set of integers, reals and positive reals are denoted by $\mathbb{Z}, \mathbb{R}$ and $\mathbb{R}^{+}$, respectively. We recall that c\`adl\`ag functions are defined to be everywhere right-continuous functions having left limits everywhere, and use $\mathbb{D}(Y)$ to denote the space of c\`adl\`ag functions whose range is set $Y$. For $a,b\in\mathbb{R}$, we use the notations $a\vee b := \max(a,b)$, $a\wedge b := \min(a,b)$, $\left[a\right]^{+} := 0 \vee a$, and $\lceil a \rceil := \min\{n\in\mathbb{	Z} : n \geq a\}$. The symbol $\circ$ denotes the composition operator, and spt$(\cdot)$ denotes the support of a function.


\section{Model}

\begin{figure}[t]
  \centering
    \includegraphics[width=0.49\textwidth]{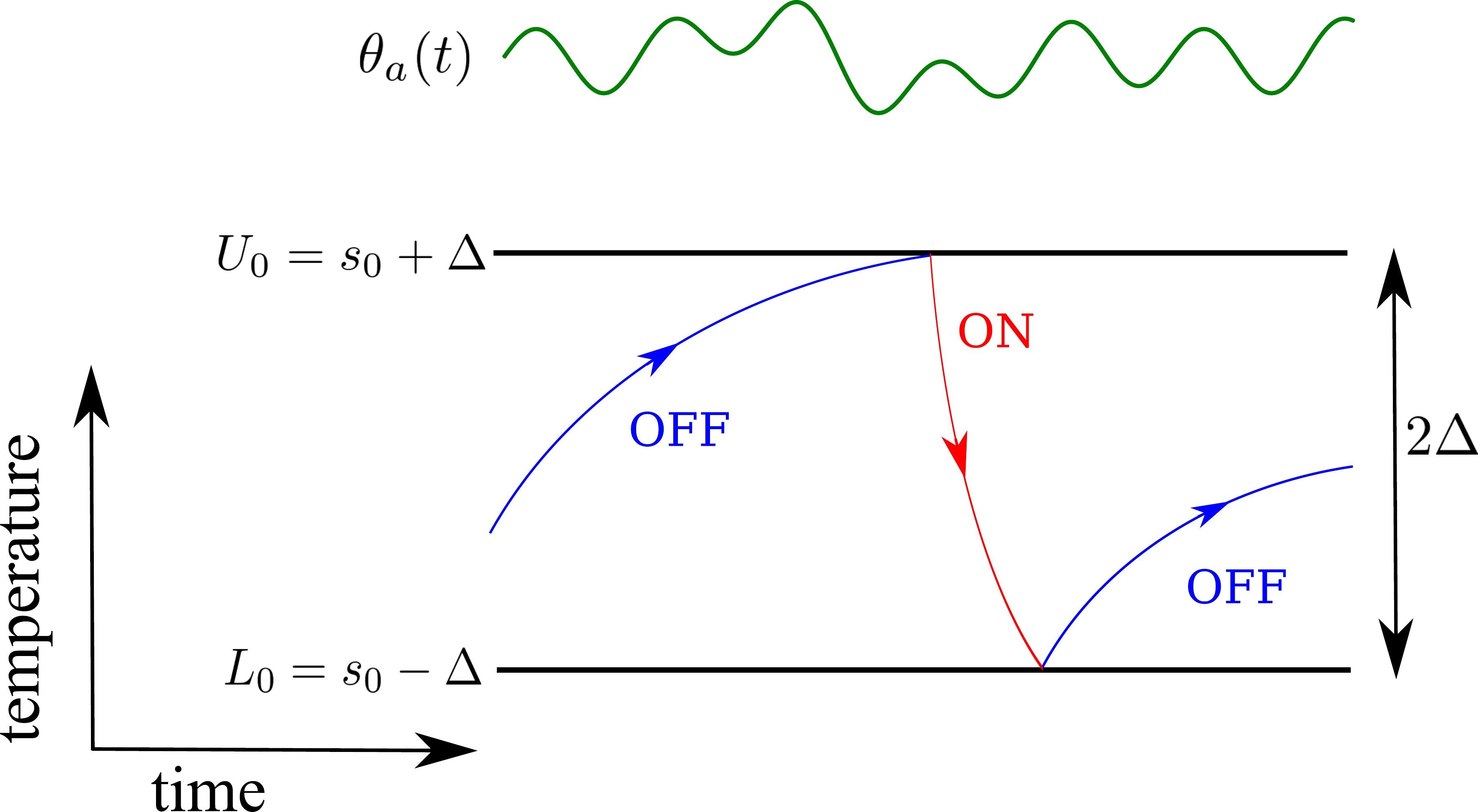}
    \caption{The dynamic behavior of a TCL with fixed setpoint $s_{0}$, is illustrated for a time-varying ambient temperature $\theta_{a}(t)$. The comfort temperature interval for the TCL is $[L_{0},U_{0}]$ with range $U_{0}- L_{0} = 2\Delta$. The indoor temperature trajectory $\theta(t)\in[L_{0},U_{0}]$ consists of alternating OFF (\emph{blue, up-going}) and ON (\emph{red, down-going}) segments, with the boundaries $L_{0}$ and $U_{0}$ acting as reflecting barriers for the ON and OFF segments, respectively.}
    \label{TCLDynamics}
\end{figure}

\subsection{Dynamics of Individual Thermostatically Controlled Load}
The dynamic behavior of an individual TCL is shown in Fig. \ref{TCLDynamics}\!\!\!. At time $t$, let us denote the indoor temperature by $\theta(t)$, and the ambient temperature by $\theta_{a}(t)$. At $t=0$, an occupant privately sets a temperature $s_{0}:=s(0)$, called setpoint, close to which the indoor temperature $\theta(t)$ must lie at all times. If the occupant is willing to tolerate at most $\pm \Delta$ temperature deviation from $s_{0}$, then we define its ``temperature comfort range" as $[L_{0},U_{0}] := [s_{0}-\Delta,s_{0}+\Delta]$. If the setpoint $s$ does not change with time, then $s(t) \equiv s_{0}$, and consequently, the comfort boundaries $L_{0}$ and $U_{0}$ remain fixed over time. For specificity we consider the problem of controlling air-conditioning rather than heating, though the theory
developed in the sequel applies to both.

The rate of change of $\theta(t)$ is governed by Newton's law of heating/cooling given by the ordinary differential equation (ODE)
\begin{eqnarray}
\dot{\theta}(t) = -\alpha\left(\theta(t) - \theta_{a}(t)\right) - \beta P \sigma(t),
\label{individualTCLdynamics}	
\end{eqnarray}
where $\sigma(t)$ is the ON/OFF mode indicator variable of the air-conditioner, given by
\begin{eqnarray}
\sigma(t) := \begin{cases}
	1 \qquad\qquad\text{if}\quad\theta(t) = U_{0},\\
	0 \qquad\qquad\text{if}\quad\theta(t) = L_{0},\\
	\sigma\left(t^{-}\right)\quad\quad\text{otherwise}.
 \end{cases}
\label{sigmaDynamics}	
\end{eqnarray}
In other words, $\sigma(t)=1 (0)$ indicates that the TCL is in ON (OFF) mode. In (\ref{individualTCLdynamics}), the parameters $\alpha, \beta, P >0$ respectively denote the heating time constant, thermal conductivity, and amount of thermal power drawn by the TCL in ON mode. A parameter $\eta>0$ called load efficiency, relates the thermal power drawn $P$, with the electrical power drawn $P_{e}$, via the formula $P_{e} = \frac{P}{\eta}$. The state of a TCL at time $t$ is the tuple $\{s(t), \theta(t), \sigma(t)\} \in \mathbb{R}^{2} \times \{0,1\}$.

As shown in Fig. \ref{TCLDynamics}\!\!\!, starting from an initial condition $(s_{0},\theta_{0},\sigma_{0}=0)$, the indoor temperature $\theta(t)$ rises exponentially until it hits the upper boundary $U_{0}$, at which time an OFF$\rightarrow$ON mode transition occurs, and subsequently $\theta(t)$ decreases exponentially until it hits the lower boundary $L_{0}$, at which time an ON$\rightarrow$OFF transition takes place, and so on. Thus, the dynamics of a TCL is hysteretic in the sense that if $\theta_{0} \in [L_{0},U_{0}]$, then $\theta(t) \in [L_{0},U_{0}]$ for all $t > 0$. While the qualitative behavior shown in Fig. \ref{TCLDynamics}\!\!\! is true for any $\theta_{a}(t) > U_{0}$, temporal variations of $\theta_{a}(t)$ engender time-varying heating/cooling rates for $\theta(t)$.

\begin{figure}[t]
  \centering
    \includegraphics[width=0.69\textwidth]{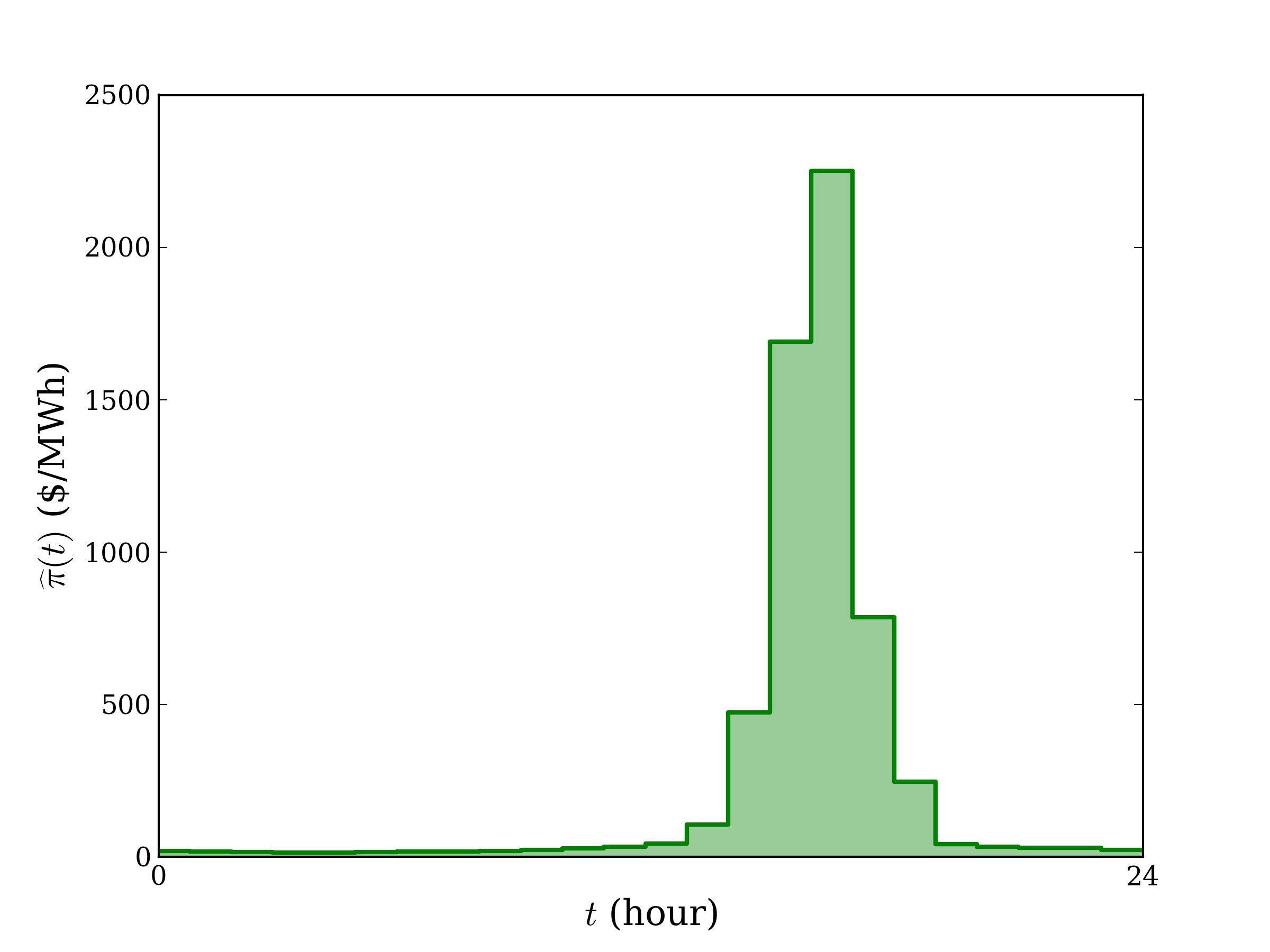}
    \caption{A typical day-ahead price forecast trajectory $\widehat{\pi}(t)$ is increasing till the late afternoon, and decreasing thereafter. The shown trajectory is for Houston on August 10, 2015, available from the day-ahead energy market of Electric Reliability Council of Texas (ERCOT)\cite{DApriceERCOT}.}
    \label{DApriceforecast}
\end{figure}

\subsection{Day Ahead Price Forecasts}
We suppose that the LSE is exposed to a price forecast $\widehat{\pi}(t)$ and ambient temperature forecast $\widehat{\theta}_{a}(t)$, over a time horizon $[0,T]$. For example, if $T = 24$ hours, and the forecast is made on the
previous day, then $\widehat{\pi}(t)$ is the forecasted price from the day-ahead energy market. We also allow
 the LSE to have a target for the total energy $E$ to be consumed over $[0,T]$ by the population of $N$ TCL customers managed by that LSE. The choice of $E$ may be restricted by the parameters of the TCL population, an issue we address in Section \ref{FeasibilitySubsection}. To minimize its energy procurement cost, the LSE would like to schedule purchase of energy when $\widehat{\pi}(t)$ is low, and defer purchase when $\widehat{\pi}(t)$ is high, while satisfying the total energy budget ($E$), as well as maintaining the comfort constraints described below
 specified by
 each of the TCLs.

A typical day-ahead price forecast trajectory $\widehat{\pi}(t)$ is shown in Fig. \ref{DApriceforecast}\!\!\!. Since the aggregate power consumption around late afternoon is expected to be higher than at other times of the day, the day-ahead price is typically forecasted to be increasing till late afternoon and decreasing thereafter.

\subsection{Comfort Range Contracts}
Each of the $N$ TCLs managed by an LSE, may have different comfort ranges $[L_{i0},U_{i0}]$ with different tolerances $\Delta_{i}$, $i=1,\hdots,N$. Let $\theta_{i}(t)$ be the indoor temperature of the $i$\textsuperscript{th} home at time $t$. The LSE is obligated to maintain the indoor temperatures
of its customer TCLs within their specified comfort ranges.
That is, for each $i=1,\hdots,N$, the LSE must ensure that $\theta_{i}(t) \in [L_{i0}, U_{i0}]$ for all $t\geq0$. Such an agreement constitutes a \emph{contract} between the LSE and an individual TCL.

An important part of this agreement is the \emph{flexibility} of a load, captured by its range $2 \Delta$. The LSE can utilize this flexibility to
optimally time its purchase of power. The LSE's business model essentially consists of sharing part of the realized
savings with the customers in terms of serving their needs for energy at low cost. Naturally, a customer with a greater flexibility
$2 \Delta$ is more valuable to the LSE and such customers can obtain better contracts from the LSE.

\subsection{Assumptions}\label{assumptions}
For rest of this paper, we make the following assumptions.
\begin{itemize}
\item The ambient temperature forecast $\widehat{\theta}_{a}(t)$, and price forecast $\widehat{\pi}(t)  >0$, are continuous functions of time $t$.

\item All TCLs are cooling, i.e., for all $t \in[0,T]$, we have $\widehat{\theta}_{a}(t)> \displaystyle\max_{i=1,\hdots,N}U_{i0}$.

\item Each TCL in the population, when ON, draws the same thermal power $P$. Further, each TCL is assumed to have same load efficiency $\eta$.

\item Without loss of generality, the initial indoor temperatures $\theta_{i0} := \theta_{i}(0) \in [L_{i0},U_{i0}]$, for all $i=1,\hdots,N$.
	
\end{itemize}



\section{Problem Formulation}
Consider an LSE managing $N$ TCLs with thermal coefficients $\{\alpha_{i},\beta_{i}\}_{i=1}^{N}$, initial conditions $\big\{\left(s_{i0}, \theta_{i0}, \sigma_{i0}\right)\big\}_{i=1}^{N}$, and comfort tolerances $\{\Delta_{i}\}_{i=1}^{N}$.
We now formulate the optimal power consumption design problem. We suppose that the LSE has available
estimates \cite{Moura2013Observer,Moura2014Parameter} of the parameters and initial conditions at the beginning of the time horizon.

\subsection{The Load Serving Entity's Objective}
\label{LSEobjectiveSubsection}
Denoting by $n_{\text{ON}}(t)$ the number of ON TCLs at time $t$, the aggregate electrical power drawn by the TCL population at time $t$ is
\begin{eqnarray*}
P_{e}n_{\text{ON}}(t) = \frac{P}{\eta}\sum_{i=1}^{N}u_{i}(t).	
\end{eqnarray*}
We take the switching trajectories $\sigma_{i}(t)$ as decision variables $u_{i}(t) \in \{0,1\}$, and introduce an extended state vector
\begin{eqnarray*}
	\bm{x}(t) := \left( \theta_{1}(t), \:\hdots,\: \theta_{N}(t),\: t,\: \int_{0}^{t} \sum_{i=1}^{N}u_{i}(\varsigma)\:\mathrm{d}\varsigma \right)^{\top}
\end{eqnarray*}
of size $(N+2)\times 1$. At time $t$, the components of $\bm{x}(t)$ are $x_{i}(t):=\theta_{i}(t)$ for $i=1,\hdots,N$, $x_{N+1}(t):=t$, and $x_{N+2}(t):=\int_{0}^{t} \sum_{i=1}^{N}u_{i}(\varsigma)\:{\mathrm{d}}\varsigma$. Then, to minimize the procurement cost for total energy consumption over $[0,T]$, the LSE needs to
\begin{equation}
\underset{u_{1}(t),\hdots,u_{N}(t)\in\{0,1\}^{N}} {\text{minimize}}	\quad \displaystyle\frac{P}{\eta} \displaystyle\int_{0}^{T} \widehat{\pi}(t) \displaystyle\sum_{i=1}^{N}u_{i}(t) \: \mathrm{d}t, \label{PlanningCostFcn}
\end{equation}
subject to the constraints:\\
\noindent \textbf{C1.} (Indoor temperature dynamics)
\begin{subequations}
\begin{align}
&\dot{x}_{i}(t) = -\alpha_{i} (x_{i}(t) - \widehat{\theta}_{a}(t)) - \beta_{i} P u_{i}(t), \quad x_{i}(0) = \theta_{i0}, \qquad i=1,\hdots,N,\label{DynamicsConstraintInitial}\\
&\dot{x}_{N+1}(t) = 1, \qquad\qquad\qquad\qquad\qquad\;\: x_{N+1}(0) = 0, \quad\, x_{N+1}(T) = T,
\label{DynamicsConstraintFinal}
\end{align}
\label{IndoorTempDynConstraint}
\end{subequations}
\noindent \textbf{C2.} (Energy/isoperimetric constraint)
\begin{flalign}
&\dot{x}_{N+2}(t) = \sum_{i=1}^{N}u_{i}(t), \qquad x_{N+2}(0) = 0, \qquad x_{N+2}(T) = \frac{\eta E}{NP},
	\label{EnergyBudgetConstraint}
\end{flalign}
\noindent \textbf{C3.} (Contractual comfort/state inequality constraint)
\begin{eqnarray}
	L_{i0} \leq x_{i}(t) \leq U_{i0},
	\label{ComfortRangeConstraint}
\end{eqnarray}
where $[L_{i0},U_{i0}] := [s_{i0} - \Delta_{i}, s_{i0} + \Delta_{i}]$ for $i = 1,\hdots,N$.

Notice that the constraints (\ref{IndoorTempDynConstraint}) and (\ref{ComfortRangeConstraint}) are decoupled, while the cost function (\ref{PlanningCostFcn}) and constraint (\ref{EnergyBudgetConstraint}) are coupled. Denoting the solution of the open loop deterministic optimal control problem (\ref{PlanningCostFcn})--(\ref{ComfortRangeConstraint}) by $\{u_{i}^{*}(t)\}_{i=1}^{N}$, the optimal power consumption trajectory is given by $P_{\text{total}}^{\text{ref}}(t) = \frac{P}{\eta}\sum_{i=1}^{N}u_{i}^{*}(t)$.

\begin{remark}
	The optimal control problem (\ref{PlanningCostFcn})--(\ref{ComfortRangeConstraint}) is non-autonomous since there are explicit dependences on time in the cost function (via $\widehat{\pi}(t)$) and in the dynamics (via $\widehat{\theta}_{a}(t)$). This motivates the inclusion of time $t$ as a component of the extended state vector $\bm{x}(t)$.
\end{remark}

\subsection{Feasibility}
\label{FeasibilitySubsection}
Let $\tau := \frac{\eta E}{P}$, and notice that the constraint (\ref{EnergyBudgetConstraint}) imposes a necessary condition for feasibility,
\begin{eqnarray}
0 \leq \overline{\tau} := \displaystyle\frac{\tau}{NT} = \displaystyle\frac{\eta E}{NPT} = \displaystyle\frac{x_{N+2}(T)}{T} \leq 1.
\label{Feasibility}	
\end{eqnarray}
Given an ambient temperature forecast $\widehat{\theta}_{a}(t)$ and parameters of the TCL population, further restriction of $\overline{\tau}$ is needed to include the possibility of zero dynamics on (meaning the temperature trajectory chatters along) the boundaries $L_{i0}$ and $U_{i0}$. Such a restriction is of the form
\begin{eqnarray}
0 \leq \overline{\tau}_{\ell} \leq \overline{\tau} \leq \overline{\tau}_{u} \leq 1,
\label{FeasibilitywZeroDynamics}	
\end{eqnarray}
where $\overline{\tau}_{\ell} := \frac{\tau_{\ell}}{NT} = \frac{\eta E_{\ell}}{NPT}$, and likewise for $\overline{\tau}_{u} = \frac{\eta E_{u}}{NPT}$. Here $E_{\ell}$ (resp. $E_{u}$) is the aggregate energy consumed if the entire population were to be maintained at their private upper (lower) setpoint boundaries, thus resulting in the lowest (highest) total energy consumption while respecting (\ref{ComfortRangeConstraint}). In other words, $E_{\ell} = \frac{P}{\eta}\sum_{i=1}^{N}\int_{0}^{T}u_{i}(t)\:\mathrm{d}t$, where the zero dynamics controls are $u_{i}(t) = \frac{\alpha_{i}}{\beta_{i}P}\left(\widehat{\theta}_{a}(t) - U_{i0}\right)$, and hence
\begin{eqnarray}
\overline{\tau}_{\ell} = \displaystyle\frac{1}{NP}\left(\displaystyle\sum_{i=1}^{N}\frac{\alpha_{i}}{\beta_{i}}\left(\langle\widehat{\theta}_{a}\rangle - U_{i0}\right)\right),
\label{taulowernormalized}	
\end{eqnarray}
where $\langle\widehat{\theta}_{a}\rangle := \frac{1}{T}\int_{0}^{T}\widehat{\theta}_{a}(t)\:\mathrm{d}t$. Similar calculation yields
\begin{eqnarray}
\overline{\tau}_{u} = \displaystyle\frac{1}{NP}\left(\displaystyle\sum_{i=1}^{N}\frac{\alpha_{i}}{\beta_{i}}\left(\langle\widehat{\theta}_{a}\rangle - L_{i0}\right)\right).
\label{tauuppernormalized}
\end{eqnarray}
Thus, (\ref{FeasibilitywZeroDynamics}) characterizes the necessary and sufficient conditions for feasibility of the optimal control problem (\ref{PlanningCostFcn})--(\ref{ComfortRangeConstraint}).

\begin{remark}
Notice that constraint \textbf{C2} expresses a total energy budget $\frac{P}{\eta}\int_{0}^{T}\sum_{i=1}^{N}u_{i}(t)\mathrm{d}t = E$. In the absence of \textbf{C2}, the optimal controls $\{u_{i}^{*}(t)\}_{i=1}^{N}$ that minimize (\ref{PlanningCostFcn}) subject to \textbf{C1} and \textbf{C3}, satisfy $\frac{P}{\eta}\int_{0}^{T}\sum_{i=1}^{N}u_{i}^{*}(t)\mathrm{d}t = E_{\ell}$, where from (\ref{taulowernormalized}), we have
\begin{eqnarray}
E_{\ell} = \frac{T}{\eta}\left(\displaystyle\sum_{i=1}^{N}\frac{\alpha_{i}}{\beta_{i}}\left(\langle\widehat{\theta}_{a}\rangle - U_{i0}\right)\right).
\label{EnergyLower}	
\end{eqnarray}
\end{remark}

\begin{remark}
Notice also that condition (\ref{FeasibilitywZeroDynamics}) is equivalent to the energy inequality $E_{\min} \leq E_{\ell} \leq E \leq E_{u} \leq E_{\max}$, where $E_{\min} := 0$, $E_{\max} := \frac{NPT}{\eta}$, $E_{\ell}$ is given by (\ref{EnergyLower}), and from (\ref{tauuppernormalized}) we have $E_{u} = \frac{1}{NP}(\sum_{i=1}^{N}\frac{\alpha_{i}}{\beta_{i}}(\langle\widehat{\theta}_{a}\rangle - L_{i0}))$. As a consequence, the feasible energy budget $E$ must belong to an interval $[E_{\ell},E_{u}]$ with length $E_{u} - E_{\ell} = \frac{2T}{\eta}\sum_{i=1}^{N}\frac{\alpha_{i}}{\beta_{i}}\Delta_{i}$.
\end{remark}

\subsection{Difficulty in Direct Numerical Simulation}
\label{DifficultyDNS}
A direct numerical approach converts the optimal control problem (\ref{PlanningCostFcn})--(\ref{ComfortRangeConstraint}) to an optimization problem via time discretization. Such a ``discretize-then-optimize" strategy leads to a mixed integer linear program (MILP), since $\theta_{i}(t) \in \mathbb{R}$ and $u_{i}(t) \in \{0,1\}$, for $i=1,\hdots,N$. Typically, the day-ahead price forecast $\widehat{\pi}(t)$ is available as a function that is piecewise constant for each hour, and so taking the Euler discretization for dynamics (\ref{DynamicsConstraintInitial}) with 1 minute time resolution results in an MILP with $24\times 60\times 2 \times N$ variables. In our experience, solving the MILP even for $N=2$ homes for the day-ahead price, is computationally expensive (with CPU runtime over 24 hours) in Gurobi \cite{Gurobi2015}. On the other hand, a linear program (LP) relaxation of the MILP, resulting from the control convexification $u_{i}\in \{0,1\} \mapsto \widetilde{u}_{i} \in [0,1]$, has much faster runtime and was reported in our earlier work \cite{AbhishekSGC2015}. Furthermore, the optimal solution of the LP relaxation has the physical meaning of average ON duration over a discretization interval (see Fig. \ref{MILPvsLP}\!\!\!).

\begin{figure}[t]
  \centering
    \includegraphics[width=0.65\textwidth]{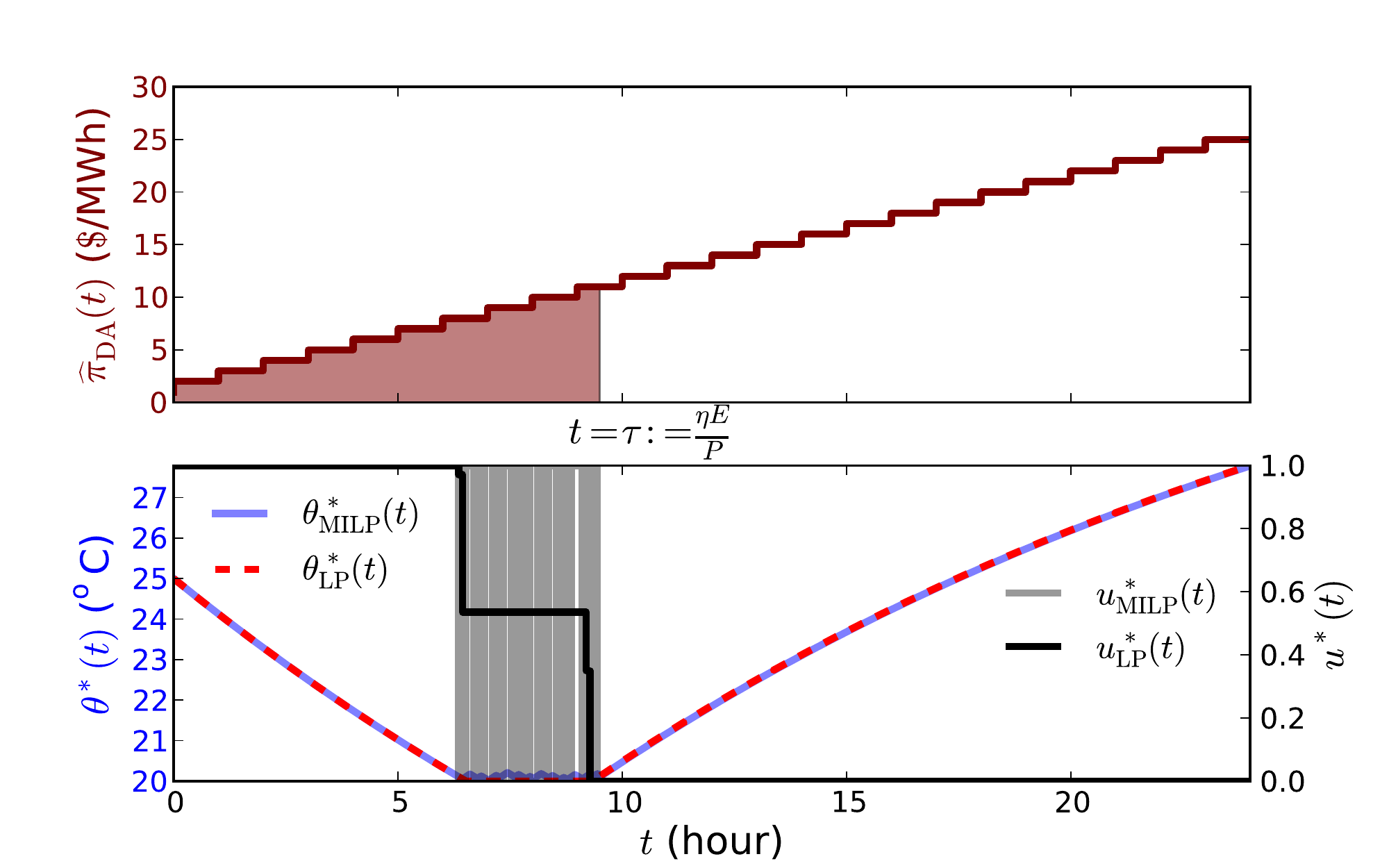}
    \caption{Top: A monotone increasing day-ahead price forecast $\widehat{\pi}_{\text{DA}}(t)$ that is piecewise constant for each hour, is shown with $\tau=\frac{\eta E}{P}$. Bottom: The corresponding solution of the optimal power consumption problem (\ref{PlanningCostFcn})--(\ref{ComfortRangeConstraint}) via discretized MILP (from Gurobi) and LP (from MATLAB \texttt{linprog}) for $N=1$ home with $[L_{0},U_{0}] = [20^{\circ}\mathrm{C},30^{\circ}\mathrm{C}]$, and constant ambient forecast $\widehat{\theta}_{a}(t) = 32^{\circ}\mathrm{C}$. The parameter values used in this simulation are from Table 1, p. 1392 in Callaway \cite{Callaway2009Energy}.}
    \label{MILPvsLP}
\end{figure}

However, the MILP equality constraint does not satisfy total unimodularity (see for example, Ch. 5 in Schrijver \cite{schrijver2002combinatorial}); consequently the optimal solution of the LP relaxation is not the optimal solution of the MILP. In the following Sections IV and V, we solve the continuous time optimal control problem (\ref{PlanningCostFcn})--(\ref{ComfortRangeConstraint}) using Pontryagin's maximum principle (PMP) \cite{PontryaginBook}, thereby obtaining qualitative insights into the optimal solution, which are otherwise difficult to gauge from the direct numerical solution of the discretized LP relaxation.


\section{Solution of the Optimal Control Problem}
For the optimal control problem (\ref{PlanningCostFcn})--(\ref{ComfortRangeConstraint}), if we remove constraints \textbf{C2} and \textbf{C3}, then the optimal control is trivial: $u_{i}^{*}(t) = 0$ for all $i=1,\hdots,N$, for all $t \in [0,T]$. In Section \ref{C1C2subsection}, we first discuss the non-trivial case of solving (\ref{PlanningCostFcn}) subject to \textbf{C1} and \textbf{C2}, i.e., in the absence of the inequality constraints \textbf{C3}. This is followed up with the solution for general case in Section \ref{ZeroAmpSoln}, with all constraints \textbf{C1}--\textbf{C3} active.

\subsection{Solution with Constraint C3 Inactive}
\label{C1C2subsection}

The following Theorem summarizes our results for this case. In particular, it reveals key structural properties of the optimal solution, viz. (i) the optimal controls are synchronizing across the TCL population, (ii) the optimal policy is of threshold type, i.e., there is a unique threshold price $\pi^{*}$ to be determined from the given day-ahead price trajectory $\widehat{\pi}(t)$ such that the optimal control is ON (resp. OFF) whenever $\widehat{\pi}(t)$ falls below (resp. exceeds) that threshold, (iii) the computation of $\pi^{*}$ amounts to performing monotone rearrangement (p. 276, Ch. 10 in Hardy \etal. \cite{HardyLittlewoodPolyaBook}) of the trajectory $\widehat{\pi}(t)$ and allocating the requisite ON time $\frac{\tau}{N}$ in the interval $[0,T]$ such a way that corresponds to the least price segment (this will be further elaborated in Remark \ref{MonotoneRearrangmentRemark} following the proof).

\begin{theorem}
\label{12theorem}
Consider problem (\ref{PlanningCostFcn}) with constraints \textbf{C1}--\textbf{C2}. Then
\begin{enumerate}
\item[(i)] the optimal controls are synchronizing, i.e., $u_{1}^{*}(t) = u_{2}^{*}(t) = \hdots = u_{N}^{*}(t)$ at each $t\in[0,T]$;

\item[(ii)] there is a unique threshold price $\pi^{*}$, such that $u_{i}^{*}(t)=1(0)$ for all $i=1,\hdots,N$, iff $\widehat{\pi}(t) < (\geq) \pi^{*}$;

\item[(iii)] let $\Phi_{\widehat{\pi}}\left(\widetilde{\pi}\right) := \int_{0}^{T} \mathds{1}_{\{\widehat{\pi}(t) \leq \widetilde{\pi}\}} \: \mathrm{d}t,\; \pi^{*} := \inf \{\widetilde{\pi}\in\mathbb{R}^{+} : \Phi_{\widehat{\pi}}(\widetilde{\pi}) = \frac{\tau}{N}\}$, $S := \big\{s\in[0,T] : \widehat{\pi}(s) < \pi^{*} \big\}$, and let $\bm{\lambda}$ be the costate vector corresponding to the extended state vector $\bm{x}$. The optimal solution is
\begin{align*}
&u_{i}^{*}(t) = \begin{cases}1 &\forall t\in S\\0 & \text{otherwise},\end{cases}\\
&\lambda_{i}^{*}(t) = 0\:\forall i=1,\hdots,N,\quad\lambda_{N+1}^{*}(t) = \begin{cases}\frac{P}{\eta}N\left(\pi^{*} - \widehat{\pi}(t)\right) &\forall t\in S, \\
 	 0 &\text{otherwise},
 \end{cases}\quad \lambda_{N+2}^{*}(t) = -\frac{P}{\eta}\pi^{*},\\
&x_{i}^{*}(t) = e^{-\alpha_{i}t}\left(\theta_{i0} + \int_{0}^{t}e^{\alpha_{i}\varsigma}\left(\alpha_{i}\widehat{\theta}_{a}(\varsigma) - \beta_{i}Pu_{i}^{*}(\varsigma)\right)\:\mathrm{d}\varsigma\right) \quad
\forall i=1,\hdots,N,\quad x_{N+1}^{*}(t) = t, \quad x_{N+2}^{*}(t) =   \begin{cases}N t &\forall t\in S, \\
 	 0 &\text{otherwise}.	
 \end{cases}
\end{align*}
\end{enumerate}	
\end{theorem}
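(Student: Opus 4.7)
My plan is to apply Pontryagin's Maximum Principle (PMP) to the extended autonomous system obtained by treating $\widehat{\pi}$ and $\widehat{\theta}_{a}$ as functions of the state component $x_{N+1}(t)=t$. Form the control Hamiltonian
\[
H(\bm{x},\bm{u},\bm{\lambda}) \;=\; \sum_{i=1}^{N}\left[\frac{P}{\eta}\,\widehat{\pi}(x_{N+1})-\lambda_{i}\beta_{i}P+\lambda_{N+2}\right]u_{i} \;-\;\sum_{i=1}^{N}\lambda_{i}\alpha_{i}\bigl(x_{i}-\widehat{\theta}_{a}(x_{N+1})\bigr)\;+\;\lambda_{N+1},
\]
and read off the adjoint system $\dot{\lambda}_{i}=\alpha_{i}\lambda_{i}$, $\dot{\lambda}_{N+2}=0$, and $\dot{\lambda}_{N+1}=-\partial H/\partial x_{N+1}$. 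Since the $N$ terminal values $x_{i}(T)$ are free while only $x_{N+1}(T)=T$ and $x_{N+2}(T)$ are prescribed, transversality gives $\lambda_{i}(T)=0$; combined with the linear adjoint ODEs this forces $\lambda_{i}(t)\equiv 0$ on $[0,T]$ for $i=1,\dots,N$, while $\lambda_{N+2}$ is an unknown constant still to be pinned down by the isoperimetric endpoint condition.

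The second step is pointwise minimization of $H$ in $\bm{u}$. After substituting $\lambda_{i}\equiv 0$, the coefficient of every $u_{i}$ collapses to the \emph{common} switching function
\[
\phi(t)\;=\;\frac{P}{\eta}\,\widehat{\pi}(t)\;+\;\lambda_{N+2},
\]
which is independent of $i$ -- the heterogeneous TCL parameters $\alpha_{i},\beta_{i}$ and initial data have been eliminated. Because $H$ is affine in each $u_{i}$, the minimum over $\{0,1\}$ (equivalently, over the convex relaxation $[0,1]$) is attained at a vertex whenever $\phi(t)\neq 0$, giving the bang-bang policy $u_{i}^{*}(t)=1$ iff $\phi(t)<0$. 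This immediately yields synchronization (statement (i)) and the threshold form of the control with $\pi^{*}:=-\eta\lambda_{N+2}/P$, which must be positive because $\widehat{\pi}>0$ and, by the feasibility discussion, the optimal control must be ON on a set of positive measure. To determine $\pi^{*}$ I would plug the threshold policy into the isoperimetric terminal condition on $x_{N+2}$: under synchronization $\sum_{i}u_{i}^{*}=N\mathds{1}_{S}$, so this forces $|S|=\tau/N$. Since $\widehat{\pi}$ is continuous, $\Phi_{\widehat{\pi}}$ is nondecreasing and continuous, and the infimum definition selects the correct threshold even if $\widehat{\pi}$ has a plateau at level $\pi^{*}$; this establishes (iii), and yields the monotone-rearrangement interpretation the authors highlight.

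The remaining formulas follow by back-substitution: $x_{i}^{*}$ is the variation-of-constants solution of the linear ODE (\ref{DynamicsConstraintInitial}) with $u_{i}=u_{i}^{*}$, and $\lambda_{N+1}^{*}$ is obtained by integrating $\dot{\lambda}_{N+1}=-\tfrac{P}{\eta}\widehat{\pi}'(t)\sum_{i}u_{i}^{*}$ piecewise, using continuity at the boundary points where $\widehat{\pi}=\pi^{*}$ and the convention $\lambda_{N+1}^{*}\equiv 0$ off $S$ to fix the constants of integration. Sufficiency of the PMP conditions is automatic in this setting: the relaxed problem on $u_{i}\in[0,1]$ is a convex infinite-dimensional linear program, whose extremal solutions lie in $\{0,1\}^{N}$, and our candidate is such an extremal. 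The step I expect to require the most care is the twin use of free-endpoint transversality (which kills every $\lambda_{i}$ despite heterogeneous $\alpha_{i},\beta_{i}$) and the isoperimetric constraint (which alone pins $\lambda_{N+2}$): these together are what force a single, universal threshold and make the inf-based handling of the plateau set $\{\widehat{\pi}=\pi^{*}\}$ the natural one.
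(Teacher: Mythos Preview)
Your proposal is correct and follows essentially the same PMP-based route as the paper: form the Hamiltonian, use free-endpoint transversality plus $\dot{\lambda}_{i}=\alpha_{i}\lambda_{i}$ to kill all $\lambda_{i}$, observe the switching function is common to all $i$, and fix the constant $\lambda_{N+2}$ via the isoperimetric endpoint condition. The only notable addition you make beyond the paper is the explicit sufficiency remark via the convex LP relaxation, which the paper does not spell out; your way of fixing the integration constant for $\lambda_{N+1}^{*}$ (continuity at $\widehat{\pi}=\pi^{*}$) is a cosmetic variant of the paper's choice of enforcing $H\equiv 0$ along the optimal trajectory.
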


\begin{proof}

\begin{enumerate}

	\item[(i)] The Hamiltonian
\begin{eqnarray}
H = \displaystyle\sum_{i=1}^{N}\left[u_{i}(t)\left(\displaystyle\frac{P}{\eta}\widehat{\pi}(t) - P\beta_{i}\lambda_{i}(t) + \lambda_{N+2}(t)\right) - \alpha_{i }\lambda_{i}(t)\left(x_{i}(t) - \widehat{\theta}_{a}(t)\right)\right] + \lambda_{N+1}(t)
\label{Hamiltonian12}	
\end{eqnarray}
gives the first order optimality conditions	
\begin{align}
&\dot{\lambda}_{i}(t) = -\frac{\partial H}{\partial x_{i}} = \alpha_{i} \lambda_{i}(t), \qquad i=1,2,\hdots,N, \label{FOOC12i}\\
&\dot{\lambda}_{N+1}(t) = -\frac{\partial H}{\partial t} = -\frac{P}{\eta} \frac{\partial\widehat{\pi}}{\partial t} \sum_{i=1}^{N} u_{i}(t) -  \frac{\partial\widehat{\theta}_{a}}{\partial t}\sum_{i=1}^{N}\alpha_{i}\lambda_{i}(t),\label{FOOC12ii}\\
&\dot{\lambda}_{N+2}(t) = -\frac{\partial H}{\partial x_{N+2}} = 0 \Rightarrow \lambda_{N+2} = \text{constant}.
\label{FOOC12iii}	
\end{align}
The transversality condition yields
\begin{eqnarray}
-\displaystyle\sum_{i=1}^{N}\lambda_{i}(T) \mathrm{d}x_{i}(T) - \lambda_{N+1}(T) \mathrm{d}x_{N+1}(T) - \lambda_{N+2}(T) \mathrm{d}x_{N+2}(T) \: + \: H(T) \mathrm{d}T = 0.
\label{transversality}	
\end{eqnarray}
Since the terminal states $\theta_{i}(T)$ are free,  $\mathrm{d}x_{i}(T) = \mathrm{d}\theta_{i}(T) \neq 0 $, and hence (\ref{transversality}) implies that $ \lambda_{i}(T) = 0$, for all $i=1,\hdots,N$. Because $T$ is fixed, $\mathrm{d}x_{N+1}(T) = \mathrm{d}T = 0$. Similarly, $\mathrm{d}x_{N+2}(T) = \frac{\mathrm{d}\tau}{N} = 0$. Combining $\lambda_{i}(T)=0$ with (\ref{FOOC12i}) gives $\lambda_{i}(t) = 0$ for all $t$. Setting $\lambda_{i}(t) \equiv 0$ in (\ref{Hamiltonian12}), and invoking PMP yields the optimal controls as
\begin{eqnarray}
\underset{\left(u_{1}(t), \hdots, u_{N}(t)\right)\in\{0,1\}^{N}}{\text{argmin}}\quad\left(\displaystyle\frac{P}{\eta}\widehat{\pi}(t) + \lambda_{N+2}\right)\displaystyle\sum_{i=1}^{N}u_{i}(t).
\label{EquivMinimize12}
\end{eqnarray}
Hence, if $\frac{P}{\eta}\widehat{\pi}(t) + \lambda_{N+2} > (<) 0$ at any time $t$, then we need to minimize (maximize) $\sum_{i=1}^{N}u_{i}(t)$ over $\{0,1\}^{N}$ at that time, meaning that the \emph{optimal controls are synchronized}.

\item[(ii)] We know that $P,\eta,\widehat{\pi}(t) > 0$ $\forall\:t\in[0,T]$. Thus, if $\lambda_{N+2} \geq 0$, then $\frac{P}{\eta}\widehat{\pi}(t) + \lambda_{N+2} > 0 $ implying $ u_{i}^{*}(t) = 0 \: \forall i=1,\hdots,N,\:\forall t\in[0,T] \: $. This in turn leads to $ \dot{x}_{N+2} = 0 $ implying $ x_{N+2}(t) = x_{N+2}(0) = 0 = x_{N+2}(T) = \frac{\tau}{N}$, which is impossible since $\tau \neq 0$ (given). Therefore, the constant $\lambda_{N+2} < 0$.

Notice that whether $\frac{P}{\eta}\widehat{\pi}(t) + \lambda_{N+2}$ is $>0$ or $<0$ depends on the magnitude of the constant $\lambda_{N+2} < 0$, as well as on the magnitude of $\widehat{\pi}(t) > 0$. Depending on the sign of the time-varying sum $\frac{P}{\eta}\widehat{\pi}(t) + \lambda_{N+2}$, the optimal control will switch between 0 and 1.

Let us denote the optimal value of $\lambda_{N+2}$ as $\lambda_{N+2}^{*}$, and consider a set $S\subseteq [0,T]$ given by $S:=\{s \in [0,T] : \widehat{\pi}(s) < -\frac{\eta}{P}\lambda_{N+2}^{*}\}$. Then, from PMP, $\forall\:i=1,\hdots,N$, we can rewrite the optimal control as
$u_{i}^{*}(t) = 1 \forall t \in S$, and $=0$ otherwise. The statement follows by letting $\pi^{*} := -\frac{\eta}{P}\lambda_{N+2}^{*} > 0$, wherein the uniqueness of $\pi^{*}$ follows from the continuity of $\widehat{\pi}(t)$ (as per assumption in Section \ref{assumptions}).

\item[(iii)] To determine $u_{i}^{*}(t)$, all that remains is to determine $\lambda_{N+2}^{*}$, or equivalently $-\frac{\eta}{P}\lambda_{N+2}^{*}$. The choice of $\lambda_{N+2} < 0$, or equivalently $-\frac{\eta}{P}\lambda_{N+2} > 0$, is constrained by the terminal condition
\begin{eqnarray*}
x_{N+2}(T) = \frac{\tau}{N} \quad\Leftrightarrow\quad \int_{0}^{T}\sum_{i=1}^{N} u_{i}(t) \: \mathrm{d}t = \frac{\tau}{N} \quad\stackrel{\text{PMP}}{\Longrightarrow}\quad \int_{0}^{T} \sum_{i=1}^{N}\mathds{1}_{\{\widehat{\pi}(t) < -\frac{\eta}{P}\lambda_{N+2}\}} \: \mathrm{d}t = \frac{\tau}{N},	
\end{eqnarray*}
and hence feasible values of $-\frac{\eta}{P}\lambda_{N+2}$ comprise the set $\big\{-\frac{\eta}{P}\lambda_{N+2} \in \mathbb{R}^{+} : \int_{0}^{T} \mathds{1}_{\{\widehat{\pi}(t) < -\frac{\eta}{P}\lambda_{N+2}\}} \: \mathrm{d}t = \frac{\tau}{N}\big\}$. The optimal $\lambda_{N+2}^{*}$, that minimizes the ``cost-to-go'' $\frac{P}{\eta}\int_{0}^{T}\widehat{\pi}(t)\sum_{i=1}^{N}\mathds{1}_{\{\widehat{\pi}(t) < -\frac{\eta}{P}\lambda_{N+2}\}} \: \mathrm{d}t$, is given by
\begin{eqnarray*}
	-\frac{\eta}{P}\lambda_{N+2}^{*} = \inf \Bigg\{-\frac{\eta}{P}\lambda_{N+2}\in\mathbb{R}^{+} :  \int_{0}^{T} \mathds{1}_{\{\widehat{\pi}(t) < -\frac{\eta}{P}\lambda_{N+2}\}} \: \mathrm{d}t = \frac{\tau}{N}\Bigg\}.
\end{eqnarray*}

To determine $\lambda_{N+1}^{*}(t)$, combining (\ref{FOOC12ii}) with $\lambda_{i}(t) = 0$ results in
\begin{eqnarray*}
\dot{\lambda}_{N+1} = -\frac{P}{\eta}N\mathds{1}_{\{\widehat{\pi}(t) < -\frac{\eta}{P}\lambda_{N+2}^{*}\}}\frac{\partial \widehat{\pi}}{\partial t}.	
\end{eqnarray*}
Thus, $\lambda_{N+1}\rvert_{u_{i}^{*}(t)=0} = H\rvert_{u_{i}^{*}(t)=0} =$ constant, which we enforce to be zero. On the other hand, $\lambda_{N+1}\rvert_{u_{i}^{*}(t)=1} = - \frac{P}{\eta} N \widehat{\pi}(t) + k$, where the integration constant $k$ needs to be determined. Since the Hamiltonian evaluated at $u_{i}^{*}(t)=1$, is
\begin{eqnarray*}
	H\rvert_{u_{i}^{*}(t)=1} = N\left(\frac{P}{\eta}\widehat{\pi}(t) + \lambda_{N+2}^{*}\right) + \lambda_{N+1}\rvert_{u_{i}^{*}(t)=1} = N\lambda_{N+2}^{*} + k = \;\text{constant},
\end{eqnarray*}
which, as before, we enforce to be zero, we obtain
\begin{eqnarray*}
	k=-N\lambda_{N+2}^{*} \Rightarrow \lambda_{N+1}\rvert_{u_{i}^{*}(t)=1} = -\frac{P}{\eta}N\widehat{\pi}(t) - N\lambda_{N+2}^{*} = \frac{P}{\eta}N\left(\pi^{*} - \widehat{\pi}(t)\right) > 0
\end{eqnarray*}
as $\pi^{*} > \widehat{\pi}(t)$ $\forall t \in S$.

To derive $x_{i}^{*}(t)$ for all $i=1,\hdots,N$, we simply substitute the optimal control $u_{i}^{*}(t)$ into (\ref{DynamicsConstraintInitial}), and then integrate the resulting first-order linear non-homogeneous ODE using the method of integrating factor, yielding the desired expression.

\end{enumerate}

\end{proof}

\begin{figure}[t]
  \centering
    \includegraphics[width=0.68\textwidth]{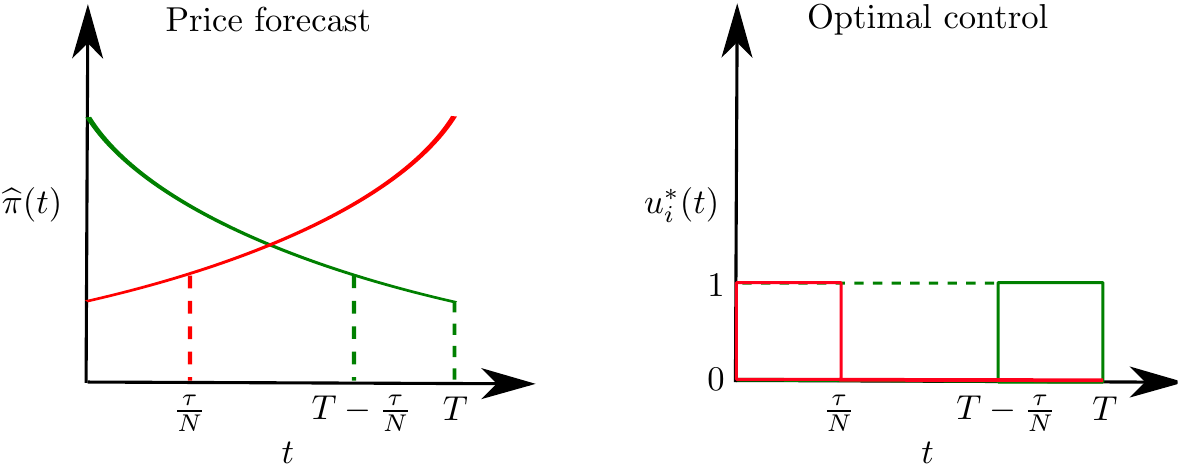}
    \caption{Left: Strictly increasing (\emph{red}) and decreasing (\emph{green}) price forecasts $\widehat{\pi}(t)$, Right: the corresponding optimal control with constraint \textbf{C3} inactive is given by $u_{i}^{*}(t) = \mathds{1}_{S}$, where $S = [0,\frac{\tau}{N}]$ for strictly increasing, and $S=[T-\frac{\tau}{N},T]$ for strictly decreasing $\widehat{\pi}(t)$.}
    \label{Monotone12}
\end{figure}

\begin{remark}\label{MonotoneRearrangmentRemark}
\textbf{Monotone rearrangement of $\widehat{\pi}(t)$:}
The main insight behind the optimal control derived in Theorem \ref{12theorem} can be obtained by looking at strictly monotone price forecasts, as shown in Fig. \ref{Monotone12}\!\!\!. In these cases, it is intuitive that the optimal ON periods of the TCLs lie at either end of the interval $[0,T]$. Theorem \ref{12theorem} tells us that the same insight can be extended to non-monotone $\widehat{\pi}(t)$, by first computing its monotone rearrangement (p. 276, Ch. 10 in Hardy \etal. \cite{HardyLittlewoodPolyaBook}) $\widehat{\pi}^{\uparrow}(t)$, and then computing the threshold $\pi^{*}$ and the ON time set $S$ from this monotone rearrangement as a function of $\frac{\tau}{N}$, as illustrated in Fig. \ref{MonotoneRearrangement}\!\!\!. This is especially relevant noting that the typical $\widehat{\pi}(t)$ is non-monotone and looks as in Fig. \ref{DApriceforecast}\!\!\!.
\end{remark}

\begin{figure*}[t]
  \centering
    \includegraphics[width=0.95\textwidth]{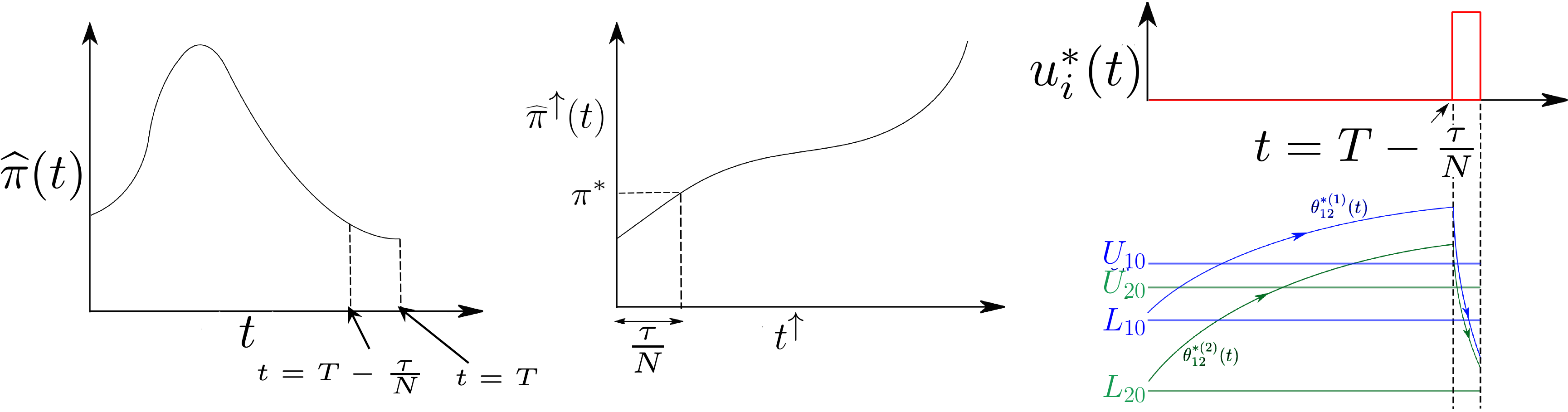}
    \caption{Left: A non-monotone price forecast $\widehat{\pi}(t)$ with $S=[T-\frac{\tau}{N},T]$ being the subset of times in $[0,T]$, with measure $\frac{\tau}{N}$, that corresponds to minimum price. Middle: $\widehat{\pi}^{\uparrow}(t)$ is the increasing rearrangement of the function $\widehat{\pi}(t)$, plotted against the
    corresponding re-arranged time $t^{\uparrow}$. Right: With constraint \textbf{C3} inactive, the optimal control $u_{i}^{*}(t)$ and optimal indoor temperature trajectories $\theta_{12}^{*(i)}(t)$ are shown for $i=1,2$ TCLs. The subscript $_{12}$ in $\theta_{12}^{*(i)}(t)$ denotes that constraints \textbf{C1} and \textbf{C2} are active.}
    \label{MonotoneRearrangement}
\end{figure*}

\begin{remark}\label{NonuniquenessRemark}
\textbf{Non-uniqueness of optimal control:}
From Theorem \ref{12theorem}, the synchronized optimal control $\{u_{i}^{*}(t)\}_{i=1}^{N}$, is unique iff the set $S$ is unique, where $S$ is the pre-image of $\pi^{*}$. Notice that although $\pi^{*}$ is unique for any continuous $\widehat{\pi}(t)$, uniqueness of $S$ depends on whether there exist time intervals of constancy in price forecast $\widehat{\pi}(t)$. For example, in Fig. \ref{MonotoneRearrangement}\!\!\!, there is no such interval of constancy, and hence the pre-image set $S$, and the optimal control $u_{i}^{*}(t)$, are unique. This remains true even when $\frac{\tau}{N}$ is large (see Fig. \ref{nonunique}\!\!\!(a)). Non-uniqueness, however, can arise if there exist an interval of constancy $V$, and $\frac{\tau}{N}$ is large enough that $S$ contains at least a subset of $V$ (see Fig. \ref{nonunique}\!\!\!(b)). The uncountable number of non-unique solutions arising from such a situation can be resolved by fixing the convention of choosing the optimal control with minimum number of switchings.
\end{remark}


\begin{figure}[t]
  \centering
    \includegraphics[width=0.85\textwidth]{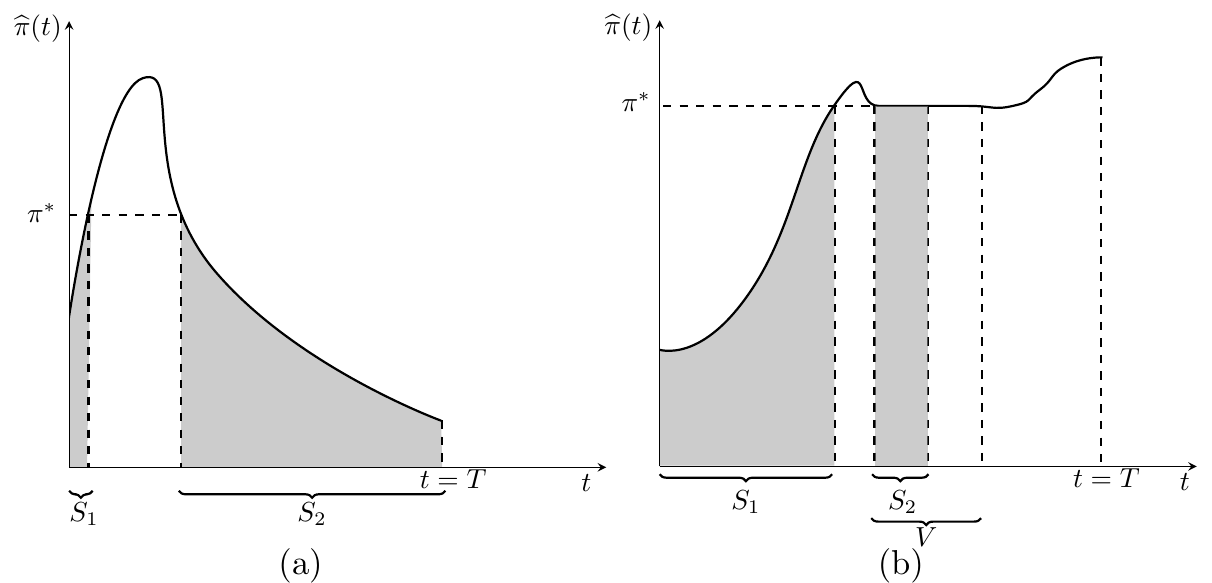}
    \caption{(a) For this non-monotone price forecast, the set $S = S_{1} \cup S_{2}$ is unique, where $|S| = \frac{\tau}{N}$, and hence $u_{i}^{*}(t)$ is unique too. (b) Here also $S = S_{1} \cup S_{2}$ such that $|S| = \frac{\tau}{N}$, however, $\frac{\tau}{N}$ and hence the threshold $\pi^{*}$ are such that the set $S_{2} \subset V$, where $V$ is the total interval of constancy. Thus, subsets of $S_{2}$ can be interchanged with equi-measure subsets of $V\setminus S_{2}$ without affecting the cost. Therefore, the set $S$ and optimal control $u_{i}^{*}(t)$ are not unique in this case. Since these sets are intervals, the number of such interchanges, and hence the number of optimal controls, is uncountable.}
    \label{nonunique}
\end{figure}

\subsection{Solution with Zero Amplitude Chattering}
\label{ZeroAmpSoln}
Now we focus on solving (\ref{PlanningCostFcn}) subject to constraints (\textbf{C1})--(\textbf{C3}), under the assumption that the indoor temperature trajectories $\theta_{i}(t)$ can slide along the boundaries $L_{i0}$ and $U_{i0}$, which can be thought of as the limits of small amplitude chattering. We assume that sliding along $L_{i0}$ holds the ON mode, while the same along $U_{i0}$ holds the OFF mode. Our objective is to obtain qualitative insight into the solution structure under these simplifying assumptions. In Section \ref{minChatteringAmplabel}, we consider the practical case of finite amplitude chattering.

To describe the optimal solution, we next define the two-sided Skorokhod map \cite{Kruk2007,Kruk2008}, which generalizes the one-sided version originally introduced by Skorokhod \cite{Skorokhod1961}.

\begin{definition} \textbf{Two-sided Skorokhod Map:}\\
	Given $0<L<U<\infty$, and scalar trajectory $y(\cdot)\in\mathbb{D}\left((-\infty,\infty)\right)$, the two-sided Skorokhod map $\Psi_{L,U}: \mathbb{D}\left((-\infty,\infty)\right) \mapsto \mathbb{D}\left([L,U]\right)$ is defined as $z(t) = \Psi_{L,U} (y(t)) := \Lambda_{L,U} \circ \Psi_{L,\infty} (y(t))$, where
\begin{align*}
\Lambda_{L,U}\left(\phi\right)\left(t\right) &:= \phi\left(t\right) - \displaystyle\sup_{0\leq s \leq t} \left(\left[\phi\left(s\right) - U\right]^{+} \wedge \displaystyle\inf_{s\leq r \leq t}\left(\phi\left(r\right) - L\right)\right),\\[1em]
\Psi_{L,\infty} (y(t)) &:= y(t) + \displaystyle\sup_{0\leq s \leq t} \left[L - y(s)\right]^{+}.	
\end{align*}
\label{TwoSidedSkorokhodMapDefn}
\end{definition}

The following Theorem summarizes the solution for problem (\ref{PlanningCostFcn}) with constraints \textbf{C1}--\textbf{C3}, under the simplifying assumption of zero amplitude chattering. The optimal controls are shown to be identical to those in Theorem \ref{12theorem}. Interestingly, it is shown that the optimal indoor temperature trajectories in this case can be obtained by applying the two-sided Skorokhod maps on the optimal indoor temperature trajectories obtained from Theorem \ref{12theorem}. Here, the Skorokhod maps are parameterized by the upper and lower comfort boundaries of the individual TCLs.

\begin{theorem}
\label{123theorem}
Consider problem (\ref{PlanningCostFcn}) with constraints \textbf{C1}--\textbf{C3}. The optimal controls are synchronizing, and, as in Theorem \ref{12theorem}, based on a price forecast threshold $\pi^{*}$, switch between 0 and 1. Assuming zero amplitude chattering to be feasible, the open loop optimal controls $u_{i}^{*}(t)$ are identical to those in Theorem \ref{12theorem}. For $i=1,\hdots,N$, the optimal states $x_{i}^{*}(t)$ are the two-sided Skorokhod maps parameterized by individual comfort ranges $[L_{i0},U_{i0}]$, acting on respective optimal states from Theorem \ref{12theorem}, i.e., $\theta_{123}^{*(i)}(t) = \Psi_{L_{i0},U_{i0}}\left(\theta_{12}^{*(i)}(t)\right)$, where $\theta_{123}^{*(i)}(t)$ is the optimal indoor temperature trajectory when constraints \textbf{C1}--\textbf{C3} are active, and $\theta_{12}^{*(i)}(t)$ is the same when constraints \textbf{C1}--\textbf{C2} are active, for the $i$\textsuperscript{th} TCL.
\end{theorem}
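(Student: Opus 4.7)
The plan is to apply Pontryagin's maximum principle (PMP) for problems with pure state inequality constraints---adjoining \textbf{C3} with nonnegative Borel multiplier measures $\mu_i^L,\mu_i^U$ supported respectively on $\{t:x_i(t)=L_{i0}\}$ and $\{t:x_i(t)=U_{i0}\}$---and to show that the first-order system collapses onto that of Theorem~\ref{12theorem} in the interior, with the boundary behaviour absorbed into a Skorokhod-type reflection of the state. The decisive structural observation is that \textbf{C3} is decoupled across $i$, so the multipliers $\mu_i^L,\mu_i^U$ feed only into the costate $\lambda_i$ and never couple different TCLs in the control-affine Hamiltonian; this preserves the synchronization argument of Theorem~\ref{12theorem}.

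For the control part, I would argue that on intervals where $x_i(t)\in(L_{i0},U_{i0})$ the costate ODE still reads $\dot\lambda_i=\alpha_i\lambda_i$, which together with the free-terminal transversality $\lambda_i(T)=0$ and the standard inward-jump conditions at entry/exit of the active set forces $\lambda_i\equiv 0$ off the boundary arcs. Substituting $\lambda_i=0$ into the Hamiltonian and invoking PMP reduces the pointwise control-minimization to
\begin{equation*}
\underset{(u_1(t),\ldots,u_N(t))\in\{0,1\}^N}{\text{argmin}}\;\Bigl(\frac{P}{\eta}\widehat\pi(t)+\lambda_{N+2}\Bigr)\sum_{i=1}^{N}u_i(t),
\end{equation*}
identical to \eqref{EquivMinimize12}. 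Hence the synchronized threshold rule and the monotone-rearrangement determination of $\pi^*=-\frac{\eta}{P}\lambda_{N+2}^*$ via $\int_0^T\mathds{1}_{\{\widehat\pi<\pi^*\}}\mathrm{d}t=\frac{\tau}{N}$ carry over verbatim, establishing $u_i^*(t)=\mathds{1}_S(t)$ exactly as in Theorem~\ref{12theorem}.

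For the state, I would integrate \eqref{DynamicsConstraintInitial} with $u_i=u_i^*$ in the sliding-mode / zero-chattering sense: whenever the unconstrained candidate $\theta_{12}^{*(i)}$ produced by this control would leave $[L_{i0},U_{i0}]$, the path is held at the active boundary by infinite-frequency alternation of $u_i$ averaging to the zero-dynamics duty cycle $\frac{\alpha_i}{\beta_i P}(\widehat\theta_a(t)-U_{i0})$ at $U_{i0}$, and its analogue at $L_{i0}$. I would then identify the resulting reflected path with $\Psi_{L_{i0},U_{i0}}(\theta_{12}^{*(i)})$ by appealing to Definition~\ref{TwoSidedSkorokhodMapDefn}: the sliding corrections at the two comfort boundaries are exactly the minimal nondecreasing pushing terms that keep the trajectory in $[L_{i0},U_{i0}]$ while leaving it unchanged on interior arcs, which is precisely the $\Lambda_{L,U}\circ\Psi_{L,\infty}$ decomposition.

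The step I expect to be the main obstacle is reconciling the isoperimetric constraint \textbf{C2} with the boundary sliding: the bang-bang $u_i^*$ of Theorem~\ref{12theorem} satisfies $\int_0^T\sum_i u_i^*\,\mathrm{d}t=\frac{\tau}{N}$ in the unconstrained sense, but on boundary arcs the effective ON-rate is a zero-dynamics value in $(0,1)$ rather than in $\{0,1\}$, so the integrated ON-time can drift. This is exactly what the hypothesis ``zero amplitude chattering is feasible'' is present to absorb: via \eqref{FeasibilitywZeroDynamics} it is equivalent to $\overline\tau_\ell\le\overline\tau\le\overline\tau_u$, which bounds how much ON-time can be reallocated to (or from) the Skorokhod pushing processes without violating the budget $E$. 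The verification then amounts to applying an occupation-time identity to the two-sided Skorokhod map and checking that the total ON-time under the reflected dynamics still equals $\frac{\tau}{N}$, closing the argument.
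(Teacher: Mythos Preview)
Your route is more analytic than the paper's. After a one-line appeal to the state-constrained necessary conditions to assert that $\pi^{*}$ is unchanged, the paper abandons the adjoint machinery entirely and gives a direct comparison argument: for strictly monotone $\widehat\pi$ it partitions feasible trajectories into an ``initially up-going'' and an ``initially down-going'' family, shows by elementary cost comparison that within the up-going family the best policy is to hit $U_{i0}$, slide there at zero cost until $T-\tfrac{\tau}{N}$, then switch ON, and finally checks that every initially down-going trajectory is strictly worse. Non-monotone $\widehat\pi$ is reduced to this case by monotone rearrangement. So the paper's proof is a trajectory-comparison argument, not a first-order-conditions argument; your PMP program is a genuinely different (and more systematic) attack.

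That said, your adjoint step has a gap. You claim that $\lambda_i(T)=0$ together with the ``standard inward-jump conditions'' forces $\lambda_i\equiv 0$ on all interior arcs. But the jump conditions at junction times only pin down the \emph{sign} of the jump (the entry/exit multipliers are nonnegative), not its magnitude; propagating $\lambda_i=0$ backward from $T$ through a boundary arc can leave $\lambda_i$ strictly positive on an earlier interior arc, which would then feed into the switching function $\tfrac{P}{\eta}\widehat\pi-\beta_iP\lambda_i+\lambda_{N+2}$ and desynchronize the controls. You need an extra argument (e.g., ruling out such jumps by showing they violate the complementary slackness or the tangency conditions at exit) before the reduction to \eqref{EquivMinimize12} goes through. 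The paper's comparison proof avoids this difficulty altogether.

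On your final paragraph: the worry is legitimate but the resolution is simpler than an occupation-time identity. The standing convention in this section is that sliding along $U_{i0}$ ``holds the OFF mode'' and sliding along $L_{i0}$ ``holds the ON mode''; under that convention the nominal control is literally $u_i^{*}=\mathds{1}_{S}$ on boundary arcs as well, so $\int_0^T\sum_i u_i^{*}\,\mathrm{d}t=\tau$ holds exactly and there is no drift to repair. The zero-dynamics duty cycles you write down are what the physical device does in the chattering limit, not what the idealized $u_i^{*}$ records; the role of \eqref{FeasibilitywZeroDynamics} is only to guarantee that such chattering can realize the reflected trajectory, not to rebalance the energy budget.
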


\begin{proof}
From the necessary conditions for optimality under state inequality constraints \cite{bryson1963optimal,VinterBook}, it can be directly verified that the $\pi^{*}$ is as in Theorem \ref{12theorem}. Hence the monotone rearrangement argument applies as before, and $u_{i}^{*}(t)$ are synchronized as function of time. However, the optimal states have different hitting times to the respective boundaries $L_{i0}$ and $U_{i0}$. For brevity, we provide below a simple graphical argument for the optimal states for strictly monotone (w.l.o.g. decreasing) $\widehat{\pi}(t)$. The non-monotone $\widehat{\pi}(t)$ can be dealt via the monotone rearrangement, and the non-uniqueness due to constancy can be dealt with by adopting a minimum switching convention, as earlier.

By the argument above, consider strictly decreasing $\widehat{\pi}(t)$ as in Fig. \ref{Monotone12}\!\!\! left, green curve, and fix the $i$\textsuperscript{th} home with initial indoor temperature $\theta_{i0}$, and comfort boundaries $L_{i0}$ and $U_{i0}$. At time $t=0^{+}$, the trajectory $\theta_{i}(t)$ can move either exponentially upward or downward. For $t\in[0,T]$, let us call the set of all feasible trajectories $\theta_{i}(t) \in [L_{i0}, U_{i0}]$ for which $u_{i}(0^{+})=0$, as the ``initially up-going family". Similarly, define ``initially down-going family" for $u_{i}(0^{+})=1$. Our proof consists of the following two steps.

Step 1: Finding the optimal indoor temperature trajectory among the ``initially up-going family": We notice that among the ``initially up-going family'', it is optimal to hit $U_{i0}$, since otherwise turning the TCL ON before hitting $U_{i0}$ strictly increases the cost, as $\widehat{\pi}(t)$ is strictly decreasing. Similarly, starting from the time at which $U_{i0}$ is hit, it is then optimal to hold till $T-\frac{\tau}{N}$ as sliding along $U_{i0}$, as per assumption, does not contribute to the cost. For $t\in[T-\frac{\tau}{N},T]$, we notice that we must keep $u_{i}(t) = 1$ to respect the energy constraint. Further, notice that any de-synchronization among TCLs increase cost. Thus, the optimal temperature trajectory among the ``initially up-going family" looks like those shown in Fig. \ref{SkorokhodNonmonotone}\!\!\! bottom right. From Definition \ref{TwoSidedSkorokhodMapDefn}, we find that the optimal indoor temperature trajectory among the ``initially up-going family'' is $\Psi_{L_{i0},U_{i0}}\left(\theta_{12}^{*(i)}(t)\right)$.

Step 2: Showing that any trajectory from the ``initially down-going family" has cost strictly larger than the same for the optimal trajectory in Step 1: This can be easily verified by comparing the optimal from Step 1, with any trajectory from the ``initially down-going family" using that $\widehat{\pi}(t)$ is strictly decreasing.

Combining the above two steps, we conclude that for $\widehat{\pi}(t)$ strictly decreasing, the optimal indoor temperature trajectory found in Step 1 is the optimal among all feasible indoor temperature trajectories. Similar argument applies to $\widehat{\pi}(t)$ strictly increasing, and to monotone rearranged version $\widehat{\pi}^{\uparrow}(t)$ in case $\widehat{\pi}(t)$ is non-monotone. We eschew the details and illustrate an example in Fig. \ref{SkorokhodNonmonotone}\!\!\! to help the readers follow our main argument.
\end{proof}

\begin{figure}[htpb]
  \centering
    \includegraphics[width=0.68\textwidth]{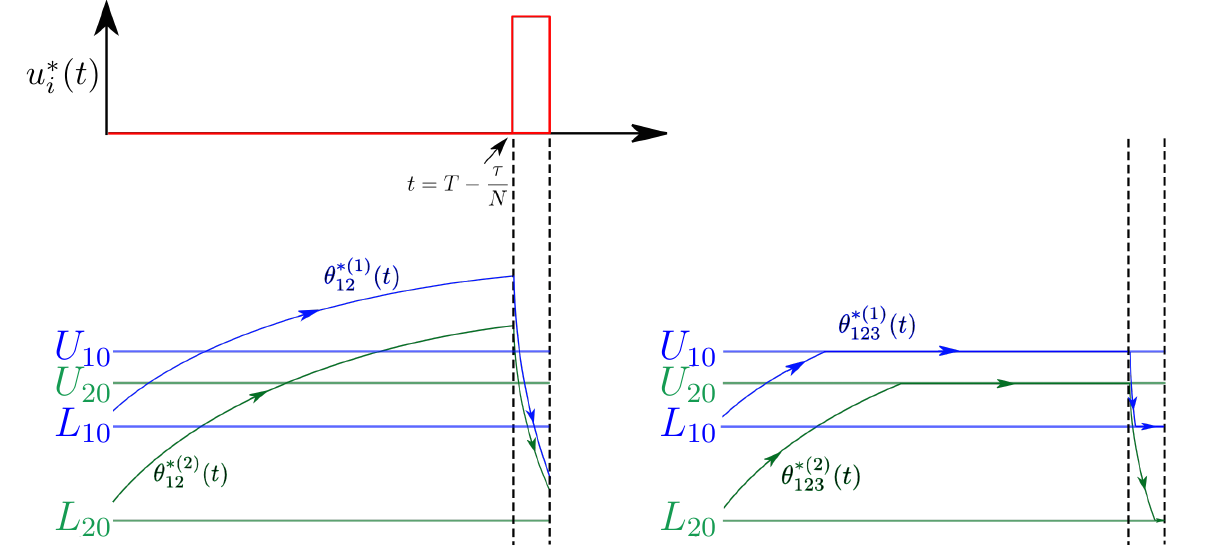}
    \caption{For the non-monotone price forecast $\widehat{\pi}(t)$ same as in Fig. \ref{MonotoneRearrangement}\!\!\!, the optimal controls $u_{i}^{*}(t)$ (top left) with constraints \textbf{C1}--\textbf{C3}, are same as those shown in Fig. \ref{MonotoneRearrangement}\!\!\!. However, the optimal indoor temperatures $\theta_{123}^{*(i)}(t)$ (right bottom) are different from $\theta_{12}^{*(i)}(t)$ (left bottom, same as in Fig. \ref{MonotoneRearrangement}\!\!\!), for $i=1,2$ TCLs. The subscript $_{123}$ in $\theta_{123}^{*(i)}(t)$ denotes that constraints \textbf{C1}--\textbf{C3} are active.}
    \label{SkorokhodNonmonotone}
\end{figure}


\section{Implementable Solution with A Specified Minimum Switching Period}
\label{minChatteringAmplabel}



Since physical thermostats have a minimum chattering amplitude, or equivalently minimum ON-OFF time period $T_{m} > 0$, it is important to find an algorithm that explicitly accounts for this device limitation in the control design. Given this parameter $T_{m}$, the following Theorem gives an \emph{exact} algorithm to compute the binary optimal control via convexification.

The importance of the Theorem below lies in decoupling the two technical difficulties in solving problem (\ref{PlanningCostFcn}) subject to (\textbf{C1})--(\textbf{C3}), namely the non-convexity of the control and the presence of state inequality constraints. In other words, it allows us to first solve the convexified optimal control problem with the state inequality constraints, and then use Algorithm \ref{vstar2ustar} (introduced as part of the Theorem below) as a post-processing tool to recover the binary optimal controls respecting the prescribed minimum ON-OFF time period $T_{m}$.

\begin{theorem}
\label{RecoverOptimalBinary}
For $i=1,\hdots,N$, consider the control convexification $u_{i}(t)\in \{0,1\} \mapsto v_{i}(t) \in [0,1]$, and let $v_{i}^{*}(t)\in[0,1]$ be the optimal convexified control corresponding to the non-convex optimal control problem (\ref{PlanningCostFcn})--(\ref{ComfortRangeConstraint}) with $N$ TCLs having thermal coefficients $\{\alpha_{i},\beta_{i}\}_{i=1}^{N}$. Let $\vartheta_{i}^{*}(t)$ (resp. $\theta_{i}^{*}(t)$) be the indoor temperature trajectory realized by the optimal control $v_{i}^{*}(t)$ (resp. $u_{i}^{*}(t)$). Then, Algorithm \ref{vstar2ustar} recovers the optimal control $u_{i}^{*}(t)\in\{0,1\}$ from $v_{i}^{*}(t)\in[0,1]$, while guaranteeing that the indoor temperatures trajectories $\vartheta_{i}^{*}(t)$ and $\theta_{i}^{*}(t)$ coincide at the end of each minimum allowable time period of length $T_{m}$.
\begin{algorithm}
\caption{Recovering $u_{i}^{*}(t)\in\{0,1\}$ from $v_{i}^{*}(t)\in[0,1]$}
\label{vstar2ustar}
\begin{algorithmic}[1]
\If {$v_{i}^{*}(t) = 0 \;\text{OR}\; 1$}
    \State $u_{i}^{*}(t) = v_{i}^{*}(t)$
\Else
    \If {$\vartheta_{i}^{*}(t)$ is at upper boundary}
        \State $u_{i}^{*}(t) = \begin{cases}1 & \forall\:t\in[(j-1)T_{m}, (j-1)T_{m}+\overline{\gamma}_{i})\\
 0	& \forall\:t\in[(j-1)T_{m}+\overline{\gamma}_{i}, j T_{m})
 \end{cases}
$\\
\qquad\quad where $j=1,\hdots,\bigg\lceil \frac{\text{spt}(v_{i}^{*}(t))}{T_{m}} \bigg\rceil$, $t\in[0,T]$
	\Else{\:$\vartheta_{i}^{*}(t)$ is at lower boundary}
        \State $u_{i}^{*}(t) = \begin{cases}0 & \forall\:t\in[(j-1)T_{m}, jT_{m}-\underline{\gamma}_{i})\\
 1	& \forall\:t\in[jT_{m}-\underline{\gamma}_{i}, j T_{m})
 \end{cases}
$\\
\qquad\quad where $j=1,\hdots,\bigg\lceil \frac{\text{spt}(v_{i}^{*}(t))}{T_{m}} \bigg\rceil$, $t\in[0,T]$	
   \EndIf
\EndIf
\end{algorithmic}
\end{algorithm}

\noindent In Algorithm \ref{vstar2ustar}, $\{(\overline{\gamma}_{i},\underline{\gamma}_{i})\}_{i=1}^{N}$ are time duration pairs such that the binary optimal trajectory $u_{i}^{*}(t)$ consists of two duty cycles: $\overline{\gamma_{i}}/T_{m}$ and $\underline{\gamma_{i}}/T_{m}$, where
\begin{align}
\overline{\gamma}_{i} &= \frac{1}{\alpha_{i}}\log\left(1 + \alpha_{i}\int_{0}^{T_{m}}e^{\alpha_{i} s}v_{i}^{*}(s)\:\mathrm{d}s\right),
\label{gammaUpper}\\
\underline{\gamma}_{i} &= \frac{1}{\alpha_{i}}\log\left(\displaystyle\frac{1}{1 - \alpha_{i}e^{-\alpha_{i}T_{m}}\int_{0}^{T_{m}}e^{\alpha_{i} s}v_{i}^{*}(s)\:\mathrm{d}s}\right).
\label{gammaLower}	
\end{align}

\end{theorem}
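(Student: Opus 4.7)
The plan is to split into two cases based on the value of $v_i^*(t)$, and then reduce the claim to a single integral‑moment matching condition on each window of length $T_m$, which directly yields the closed‑form expressions (\ref{gammaUpper}) and (\ref{gammaLower}).

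First I would dispose of the easy case. Whenever $v_i^*(t)\in\{0,1\}$, Algorithm \ref{vstar2ustar} simply sets $u_i^*(t)=v_i^*(t)$, so the convexified and binary dynamics (\ref{DynamicsConstraintInitial}) are driven by the same input, hence $\vartheta_i^*(t)\equiv\theta_i^*(t)$ trivially on such intervals. The nontrivial case is $v_i^*(t)\in(0,1)$, which (in light of Theorem \ref{123theorem}) corresponds precisely to sliding along $U_{i0}$ (resp.\ $L_{i0}$); the OFF/ON transitions in the binary trajectory are there to approximate that sliding mode while respecting the minimum switching period $T_m$.

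Next I would write the explicit solution of (\ref{DynamicsConstraintInitial}) over one window of length $T_m$ via the integrating factor $e^{\alpha_i t}$. For any admissible input $w(\cdot)$ with the same initial value at the start of the window, one obtains
\begin{equation*}
\theta_i(T_m)=e^{-\alpha_i T_m}\theta_i(0)+\int_0^{T_m}e^{-\alpha_i(T_m-s)}\bigl(\alpha_i\widehat{\theta}_a(s)-\beta_i P\,w(s)\bigr)\,\mathrm{d}s.
\end{equation*}
Applying this once with $w=u_i^*$ and once with $w=v_i^*$ and subtracting cancels the common $\widehat{\theta}_a$ term, so the endpoint matching $\vartheta_i^*(T_m)=\theta_i^*(T_m)$ is equivalent to the single moment condition
\begin{equation*}
\int_0^{T_m} e^{\alpha_i s}\,u_i^*(s)\,\mathrm{d}s \;=\; \int_0^{T_m} e^{\alpha_i s}\,v_i^*(s)\,\mathrm{d}s.
\end{equation*}
This is the key identity around which everything revolves, and it extends window‑by‑window: if matching holds on $[(j-1)T_m,jT_m]$ and both trajectories start the $(j{+}1)$‑st window from the same state, matching propagates to $jT_m$ for every $j$.

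I would then just substitute the two two‑pulse shapes prescribed by Algorithm \ref{vstar2ustar}. In the upper‑boundary case $u_i^*=\mathds{1}_{[0,\overline{\gamma}_i)}$, so the left‑hand side becomes $\alpha_i^{-1}(e^{\alpha_i\overline{\gamma}_i}-1)$; solving the moment equation yields (\ref{gammaUpper}) immediately. In the lower‑boundary case $u_i^*=\mathds{1}_{[T_m-\underline{\gamma}_i,T_m)}$, the left‑hand side equals $\alpha_i^{-1}e^{\alpha_i T_m}(1-e^{-\alpha_i\underline{\gamma}_i})$ and rearrangement gives (\ref{gammaLower}). Finally I would check well‑posedness: since $0\le v_i^*\le 1$, the integral $\int_0^{T_m}e^{\alpha_i s}v_i^*(s)\,\mathrm{d}s$ lies in $[0,\alpha_i^{-1}(e^{\alpha_i T_m}-1)]$, which forces the argument of the logarithm in (\ref{gammaUpper}) into $[1,e^{\alpha_i T_m}]$ and the denominator in (\ref{gammaLower}) into $(e^{-\alpha_i T_m},1]$, so $0\le\overline{\gamma}_i,\underline{\gamma}_i\le T_m$.

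The main obstacle is not the algebra but the bookkeeping: one has to justify identifying the intervals where $v_i^*$ is fractional with sliding segments at a single boundary, so that Algorithm \ref{vstar2ustar}'s branch (upper vs.\ lower) is well defined and the duty cycle can be anchored at that boundary without the binary excursion leaving $[L_{i0},U_{i0}]$ mid‑window. This is handled by invoking Theorem \ref{123theorem} to characterize the convexified sliding structure, and by noting that a single duty cycle of length at most $T_m$ starting from a boundary and driven by a bounded input can deviate by at most $\mathcal{O}(T_m)$ from that boundary, keeping the comfort constraint (\ref{ComfortRangeConstraint}) satisfied.
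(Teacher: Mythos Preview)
Your proposal is correct and follows essentially the same approach as the paper: split on whether $v_i^*$ is binary or fractional, note that the fractional case corresponds to sliding on a comfort boundary, derive the single moment condition $\int_0^{T_m}e^{\alpha_i s}u_i^*(s)\,\mathrm{d}s=\int_0^{T_m}e^{\alpha_i s}v_i^*(s)\,\mathrm{d}s$ from endpoint matching of the integrated dynamics, and substitute the two pulse shapes to obtain (\ref{gammaUpper}) and (\ref{gammaLower}). Your additional well-posedness check that $\overline{\gamma}_i,\underline{\gamma}_i\in[0,T_m]$ and the window-by-window propagation remark are extra details the paper's short proof omits, but the core argument is identical.
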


\begin{proof}
Since $v_{i}^{*}(t)$ is binary iff the respective upper and lower comfort boundaries are not hit, hence $u_{i}^{*}(t) = v_{i}^{*}(t)$ when $v_{i}^{*}(t) = 0$ or $1$. We know that $v_{i}^{*}(t) \in (0,1)$ iff $\vartheta_{i}^{*}(t)$ is at either upper or lower boundary. Clearly, at the upper (lower) boundary, such cycles should begin with an ON (OFF) segment, and end with an OFF (ON) segment. Matching indoor temperature values at each end of these switching period means $\vartheta_{i}^{*}(T_{m}) = \theta_{i}^{*}(T_{m})$, which gives
\begin{eqnarray}
\int_{0}^{T_{m}}e^{\alpha_{i} s} v_{i}^{*}(s)\:\mathrm{d}s = \int_{0}^{T_{m}}e^{\alpha_{i} s} u_{i}^{*}(s)\:\mathrm{d}s.
\label{MatchingCondition}
\end{eqnarray}
At upper boundary, RHS of (\ref{MatchingCondition}) equals $\int_{0}^{\overline{\gamma}_{i}}e^{\alpha_{i}s}\:\mathrm{d}s$, which solved for $\overline{\gamma}_{i}$ yields (\ref{gammaUpper}). At lower boundary, RHS of (\ref{MatchingCondition}) equals $\int_{T_{m} - \underline{\gamma}_{i}}^{T_{m}}e^{\alpha_{i}s}\:\mathrm{d}s$, which solved for $\underline{\gamma}_{i}$ yields (\ref{gammaLower}).  	
\end{proof}


It should be noted that although the indoor temperature trajectories $\theta_i^*$ and $\vartheta_i^*$ (corresponding to the controls $u_i^{*}$ and $v_i^{*}$, respectively) periodically coincide at the end of each time period $T_m$, the cost of each solution is not necessarily the same.
While the solution corresponding to  $v_i^{*}$ is the theoretical, though non-implementable, optimal solution, the solution corresponding to  $u_i^{*}$ is a suboptimal solution that has the closest implementable trajectory.

\begin{remark}
	Our optimal control problem can be viewed as an optimal control problem of a switched system; see e.g., \cite{bengea2005}.
 The use of an approximate relaxed version of the problem with a convexified control set, as is done here, has been extensively studied  in the literature (see \cite{berkovitz2012} and the references therein).
 More recently, a so-called embedding principle has been investigated in the works \cite{bengea2005,vasudevan2013,chen2017}.
 In those works, a relaxed version of the optimal control problem is first solved in a convexified input set, and then a projection operator is used to obtain the input in the original discrete set.
   This conceptual path is also followed here in Theorem 3.
   Nevertheless, the goal with which the techniques are used is different in our approach.
In the references mentioned, one of the main concerns is to address the limitation that the switched optimal control problem, without imposing additional assumptions related to the possibility of chattering, only has a solution when the space of controls is enlarged to the space of relaxed controls. The goal then becomes to construct approximate solutions that are consistent, in the sense that in the limit they converge to the optimal solution.
In our case, the starting point is a physical limitation of the system imposing a minimum ON-OFF time period of $T_m>0$. This physical limitation itself prevents infinite frequency chattering, and the problem becomes to construct solutions to an approximate problem such that they can be easily projected into a physical realizable solution space.
\end{remark}  


\section{Numerical Simulation}
\label{NumericalSimulationSection}
To illustrate how the LSE can use the results derived so far for the purpose of demand response, we now provide a numerical example where the LSE computes the day-ahead minimum cost energy procurement for its $N=500$ customers' TCLs based on ERCOT day-head price forecast $(\widehat{\pi}(t))$ data as shown in Fig. \ref{DApriceforecast}\!\!\!, and the ambient temperature forecast $(\widehat{\theta}_{a}(t))$ data for the same day (August 10, 2015) available from a weather station in Houston, Texas. These forecast data and the  real-time ambient temperature $(\theta_{a}(t))$ data on August 11, 2015, are shown in Fig. \ref{PriceAmbient}\!\!\!.
\begin{figure}[t]
  \centering
    \includegraphics[width=0.68\textwidth]{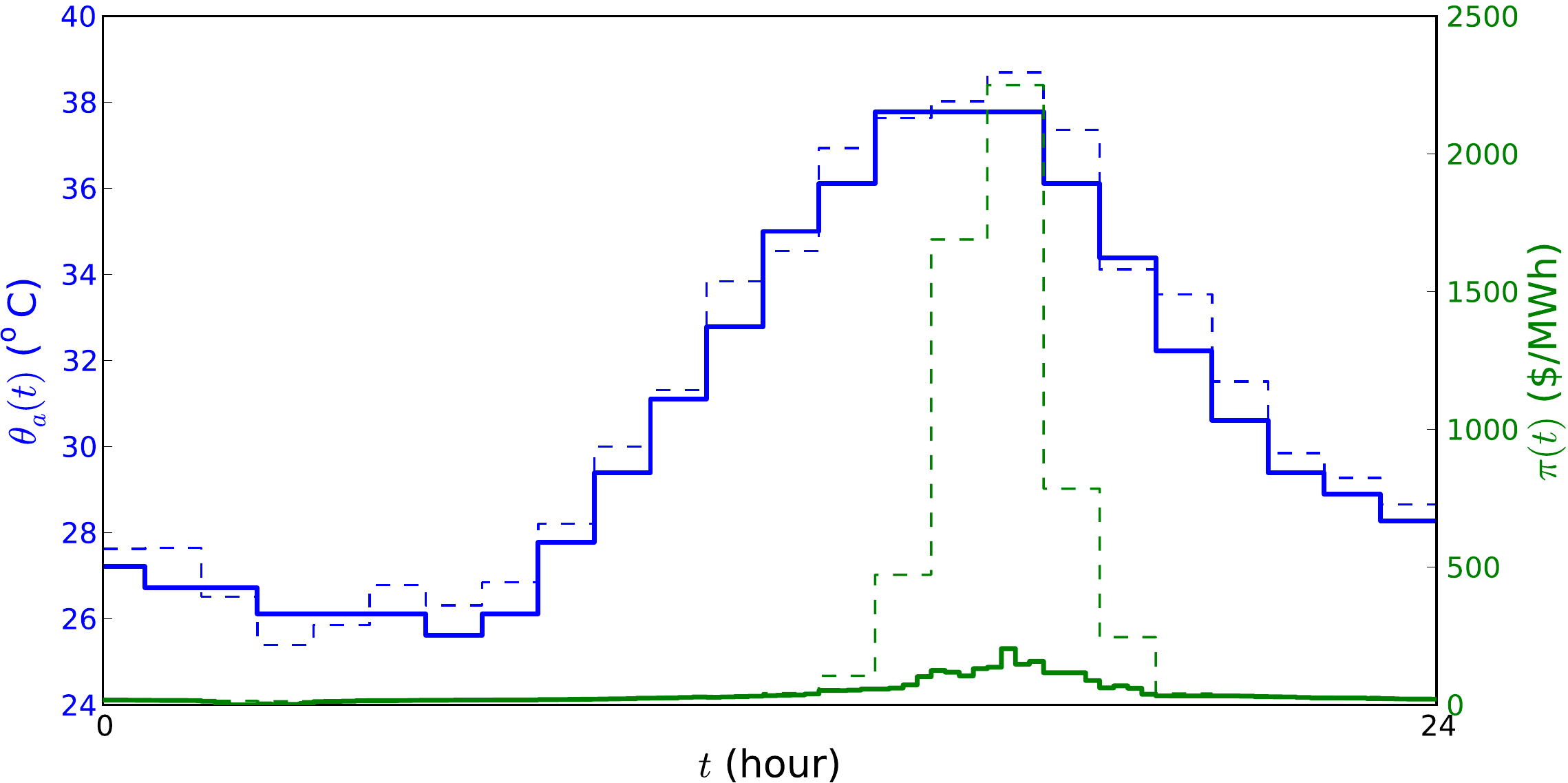}
    \caption{The day-ahead ambient temperature forecast $\widehat{\theta}_{a}(t)$ (\emph{dashed blue}), real-time ambient temperature $\theta_{a}(t)$ (\emph{solid blue}), ERCOT day-ahead price $\widehat{\pi}(t)$ (\emph{dashed green}), and ERCOT real-time price  $\pi(t)$ (\emph{solid green, not used for computation in this paper}) data for Aug. 11, 2015 in Houston.}
    \label{PriceAmbient}
\end{figure}

\begin{figure}[h]
  \centering
    \includegraphics[width=0.68\textwidth]{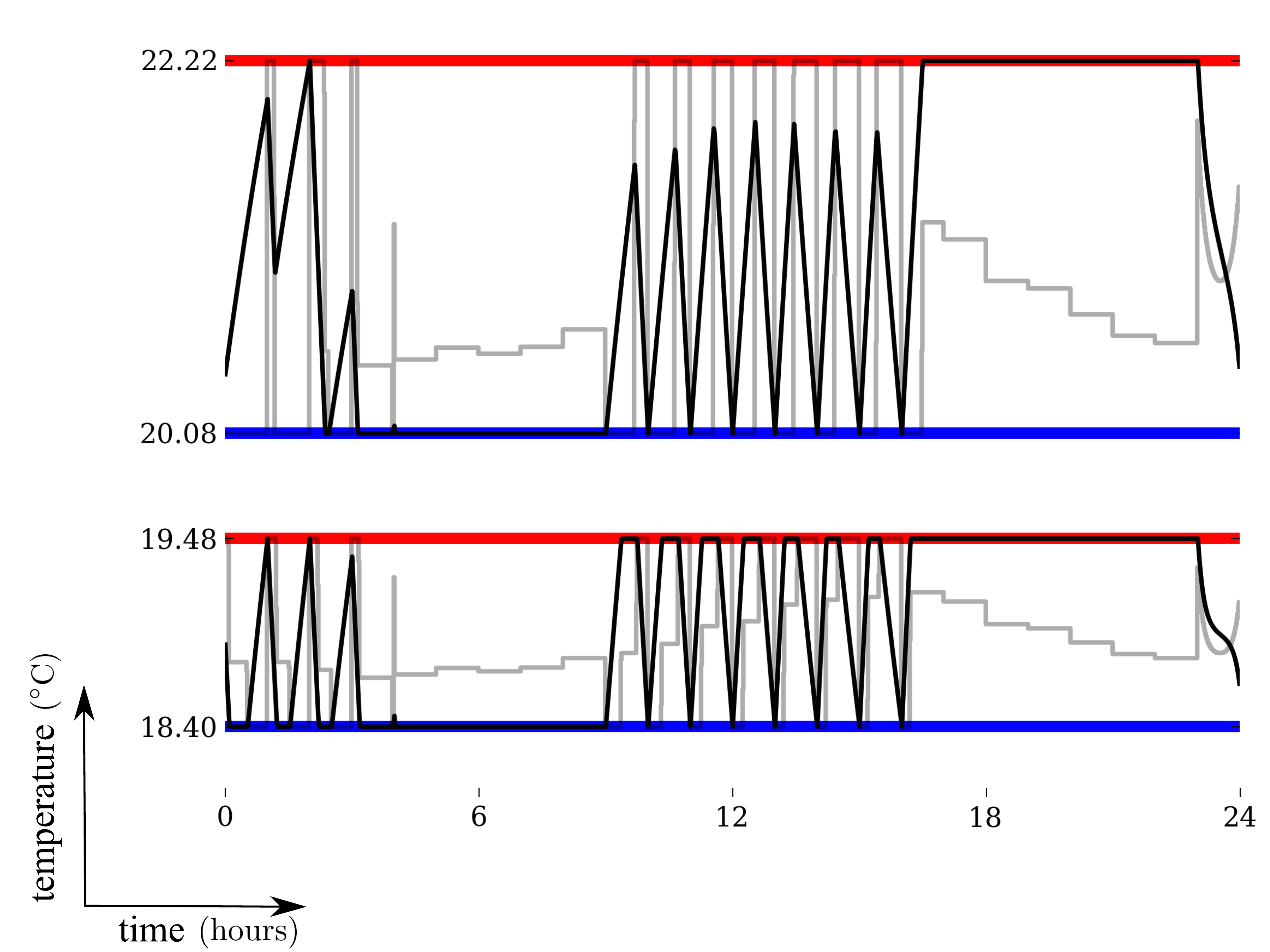}
    \caption{Convexified optimal control $v_{i}^{*}(t)\in[0,1]$ (\emph{gray}) and corresponding indoor temperature $\vartheta_{i}^{*}(t)$ in $^{\circ}\text{C}$ (\emph{black}), $i=1,2$, trajectories for two out of the total population of 500 TCL customers for which the LSE designs the optimal power consumption corresponding to the day-ahead price and ambient temperature forecast shown in Fig. \ref{PriceAmbient}\!\!\!. Details of the simulation setup are described in Section \ref{NumericalSimulationSection}. For the two representative TCLs shown above, their lower (resp. upper) comfort boundaries $L_{i}$ (resp. $U_{i}$) are depicted as \emph{blue} (resp. \emph{red}) thick horizontal lines. Specifically, $[L_{1},U_{1}] \equiv [18.40^{\circ}{\rm{C}}, 19.48^{\circ}{\rm{C}}]$ and $[L_{2},U_{2}] \equiv [20.08^{\circ}{\rm{C}}, 22.22^{\circ}{\rm{C}}]$, as shown. The parameters $(\alpha_{1},\alpha_{2}) = (4.4032, 4.1067)\times 10^{-3}$ seconds$^{-1}$, and $(\beta_{1},\beta_{2}) = (8.4510, 8.5286) \times 10^{-3}$ $\frac{^{\circ}\text{C}}{\text{kW seconds}}$. The optimal control trajectories $v_{i}^{*}(t)\in[0,1]$ (\emph{gray}) are scaled between the respective comfort boundaries, i.e., $v_{i}^{*}=1$ (resp. 0) at the upper (resp. lower) boundary. Notice that $v_{i}^{*}(t)$ takes fractional values whenever $\vartheta_{i}^{*}(t)$ lies at either upper or lower comfort boundary, as mentioned in the proof of Theorem \ref{RecoverOptimalBinary}.}
\label{LPtrajcontrolTwoHomes}
\end{figure}

\begin{figure}[h]
  \centering
    \includegraphics[width=0.48\textwidth]{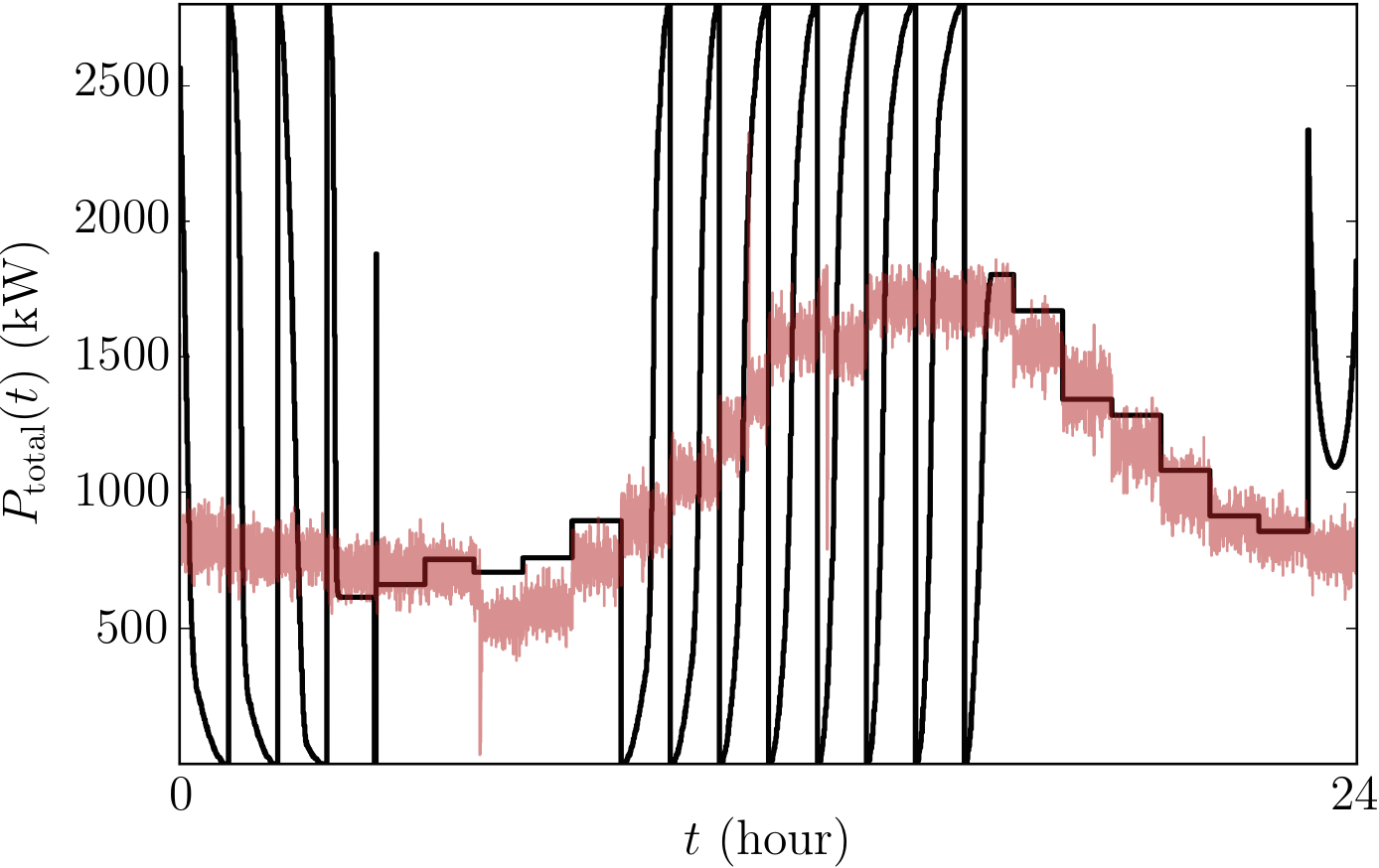}
    \caption{The \emph{black} curve shown above is the optimal power consumption trajectory $P_{\text{total}}^{\text{ref}}(t) = P_{e}\sum_{i=1}^{N}u_{i}^{*}(t)$ computed by solving the optimal control problem (\ref{PlanningCostFcn})--(\ref{ComfortRangeConstraint}), via Algorithm \ref{vstar2ustar} with $T_{m} = 1.5$ minutes. The \emph{brick} colored curve is the real-time controlled aggregate consumption $P_{\text{total}}(t)$ corresponding to control gain tuple $(k_{p},k_{i},k_{d}) = (10^{-4},10^{-6},10^{-4})$ used to move the thermostatic boundaries (see Section III.1 in \cite{AbhishekTPS2017} for details). The LSE can invoke demand response by controlling the setpoints of the TCL population in such a way that their real-time aggregate consumption $P_{\text{total}}(t)$ track the reference aggregate consumption $P_{\text{total}}^{\text{ref}}(t)$. The tracking error between the two curves depend on the forecasted versus real-time ambient temperature mismatch, as well as on the thermal inertia of the TCLs in the population.}
    \label{SetpointTracking}
\end{figure}

With the initial conditions and parameters of the heterogeneous TCL population as in Section V-A in Halder \etal. \cite{AbhishekTPS2017}, $\overline{\tau} = \frac{1}{3}$ (which was verified to be feasible using (\ref{FeasibilitywZeroDynamics})), comfort tolerances $\{\Delta_{i}\}_{i=1}^{N}$ sampled randomly from a uniform distribution over $[0.1^{\circ}\mathrm{C}, 1.1^{\circ}\mathrm{C}]$, and for $\widehat{\pi}(t)$ and $\widehat{\theta}_{a}(t)$ as in Fig. \ref{PriceAmbient}\!\!\!, the LSE solves the optimal control problem (\ref{PlanningCostFcn})--(\ref{ComfortRangeConstraint}) by first convexifying the controls $u_{i}(t) \in \{0,1\} \mapsto v_{i}(t)\in[0,1]$ for $i=1,\hdots,N$, and then recovering the optimal controls $\{u_{i}^{*}\}_{i=1}^{N}$ using Theorem \ref{RecoverOptimalBinary}. For this computation, we used 1 minute time-step for Euler discretization of dynamics (\ref{DynamicsConstraintInitial}), and solved the resulting LP with 1 million 440 thousand decision variables (see Section \ref{DifficultyDNS}) using MATLAB \texttt{linprog}. In Fig. \ref{LPtrajcontrolTwoHomes}\!\!\!, we show the \emph{convexified} optimal controls $v_{i}^{*}(t)$ (gray curves) and corresponding indoor temperature trajectories $\vartheta_{i}^{*}(t)$ (black curves) for two representative TCLs out of the total $N =$ 500 TCLs. This computation was followed by applying Algorithm \ref{vstar2ustar} to evaluate the mapping $(v_{i}^{*}(t),\vartheta_{i}^{*}(t)) \mapsto (u_{i}^{*}(t),\theta_{i}^{*}(t))$ with $T_{m} = 1.5$ minutes. The resulting optimal aggregate power consumption trajectory $P_{\text{total}}^{\text{ref}}(t) = P_{e}\sum_{i=1}^{N}u_{i}^{*}(t)$ is shown as the black curve in Fig. \ref{SetpointTracking}\!\!\!. We emphasize again that the black curve in Fig. \ref{SetpointTracking}\!\!\! is the optimal \emph{planned} aggregate consumption, computed by the LSE ahead of the actual time duration under consideration (in our case, 24 hours ahead). In operation, the LSE also needs to implement \emph{real-time} setpoint control across its customers' TCL population, so as to make the real-time aggregate consumption track the planned optimal aggregate consumption, given the mismatch between the forecasted and real-time ambient temperatures. The brick colored curve in Fig. \ref{SetpointTracking}\!\!\! corresponds to the real-time aggregate consumption for the TCL population with same $T_{m}$, and real-time ambient temperature $\theta_{a}(t)$ as in the solid blue curve in Fig. \ref{PriceAmbient}\!\!\!, for a PID velocity control gain tuple $(k_{p},k_{i},k_{d})$ used to control the setpoint boundaries as part of a mixed centralized-decentralized control. We refer the readers to Section III.1 in Halder \etal. \cite{AbhishekTPS2017} for details on the real-time setpoint control. The purpose of Fig. \ref{SetpointTracking}\!\!\! is to highlight how the solution of the open-loop optimal control problem (\ref{PlanningCostFcn})--(\ref{ComfortRangeConstraint}) can be used by the LSE as a reference aggregate consumption to be tracked in real-time, to elicit demand response.


\section{Concluding Remarks}
In this paper, we have addressed how an aggregator or load serving entity can design an optimal aggregate power consumption trajectory for a population of thermostatically controlled loads. We have formulated this operational planning problem as a deterministic optimal control problem in terms of the day-ahead price forecast, ambient temperature forecast, and an energy budget available from the load forecast. A direct numerical approach to solve the problem is computationally hard. We use tools from optimal control theory to gain analytic insights into the solution of the problem of designing optimal power consumption while respecting individual comfort range constraints. A numerical example is worked out to illustrate how an LSE can use the optimal aggregate power consumption trajectory computed offline, as a reference signal to be tracked in real-time by its customers' TCL population for the purpose of demand response.



\section*{Acknowledgments}
This work is supported in part by NSF Contract 1760554, ECCS-1546682, NSF Science \& Technology Center Grant CCF-0939370, and the Power Systems Engineering Research Center (PSERC).


\nocite{*}
\bibliography{wileyNJD-AMA}%

\end{document}